\theoremstyle{plain}
\newtheorem{theorem}{Theorem}[section]
\newtheorem*{theorem*}{Theorem}
\newtheorem{proposition}[theorem]{Proposition}
\newtheorem*{proposition*}{Proposition}
\newtheorem{corollary}[theorem]{Corollary}
\newtheorem{lemma}[theorem]{Lemma}
\newtheorem*{lemma*}{Lemma}
\newtheorem*{claim*}{Claim}
\theoremstyle{definition}
\newtheorem{definition}[theorem]{Definition}
\newtheorem{remark}[theorem]{Remark}
\newtheorem*{remark*}{Remark}
\def\Z{\mathbb{Z}}
\def\Q{\mathbb{Q}}
\def\C{\mathbb{C}}
\def\Hom{\operatorname{Hom}}
\def\ba{\begin{array}}
\def\ea{\end{array}}
\def\wt{\widetilde}
\def\ol{\overline}
\def\e{\mathrm{e}}
\newcommand{\twotwo}[4]{\left( \begin{array}{cc} #1 & #2 \\ #3 & #4 \end{array} \right)}
\newcommand{\cvec}[2]{\left( \begin{array}{c} #1 \\ #2 \end{array} \right)}
\newcommand{\an}[1]{\langle{#1}\rangle}
\newcommand{\xra}{\xrightarrow}
\DeclareMathOperator{\im}{Im}
\DeclareMathOperator{\Id}{Id}
\DeclareMathOperator{\pr}{pr}
\DeclareMathOperator{\pt}{pt}
\DeclareMathOperator{\Spin}{Spin}
\DeclareMathOperator{\colim}{colim}
\newcommand{\del}{\partial}
\newcommand{\CPI}{\mathbb{CP}^{\infty}}
\newcommand{\SC}{\mathcal{S}^{\rm st}}
\newcommand{\HSC}{\mathcal{S}^{\rm st}_{\rm h}}
\newcommand{\ah}{\alpha^{\mathrm{h}}}
\newcommand{\Spinc}{\Spin^{c}}
\newcommand{\s}{\mathfrak{s}}
\newcommand{\lmfrac}[2]{\mbox{\small$\displaystyle\frac{#1}{#2}$}}   
\newcommand{\smfrac}[2]{\mbox{\footnotesize$\displaystyle\frac{#1}{#2}$}} 
\newcommand{\tmfrac}[2]{\mbox{\large$\frac{#1}{#2}$}} 
\newcommand{\bsm}{\left(\begin{smallmatrix}}
\newcommand{\esm}{\end{smallmatrix}\right)}
\providecommand{\coker}{\mathop{\rm coker}\nolimits}
\providecommand{\Hom}{\mathop{\rm Hom}\nolimits}
\begin{document}

\title[Simply-connected manifolds with large homotopy stable classes]
{Simply-connected manifolds with large homotopy stable classes}

\author[A.~Conway]{Anthony Conway}
\address{Massachusetts Institute of Technology, Cambridge MA 02139}
\email{anthonyyconway@gmail.com}
\author[D.~Crowley]{Diarmuid Crowley}
\address{School of Mathematics and Statistics, University of Melbourne,
Australia}
\email{dcrowley@unimelb.edu.au}
\author[M.~Powell]{Mark Powell}
\address{Department of Mathematical Sciences, Durham University, United Kingdom}
\email{mark.a.powell@durham.ac.uk}
\author[J.~Sixt]{Joerg Sixt}
\email{sixtj@yahoo.de}

\def\subjclassname{\textup{2020} Mathematics Subject Classification}
\expandafter\let\csname subjclassname@1991\endcsname=\subjclassname
\expandafter\let\csname subjclassname@2000\endcsname=\subjclassname
\subjclass{Primary~57R65, 57R67. 
}
\keywords{Stable diffeomorphism, homotopy equivalence, $4k$-manifold}

\begin{abstract}
For every $k \geq 2$ and $n \geq 2$ we construct $n$ pairwise homotopically inequivalent simply-connected,
closed $4k$-dimensional manifolds, all of which are stably diffeomorphic to one another.
Each of these manifolds has hyperbolic intersection form and is stably parallelisable.
In dimension~$4$, we exhibit an analogous phenomenon for spin$^{c}$\hspace{-1pt}  structures on~$S^2 \times S^2$.

For $m\geq 1$, we also provide similar $(4m{-}1)$-connected $8m$-dimensional examples,
where the number of homotopy types in a stable diffeomorphism class
is related to the order of the image of the stable $J$-homomorphism $\pi_{4m-1}(SO) \to \pi^s_{4m-1}$.
\end{abstract}

\maketitle

\section{Introduction}
Let $q$ be a positive integer and let
$W_g := \#_g (S^{q} \times S^{q})$ be the $g$-fold connected sum of the manifold $S^{q} \times S^{q}$ with itself.
Two compact, connected smooth $2q$-manifolds~$M_0$ and $M_1$ with the same Euler characteristic are
\emph{stably diffeomorphic}, written $M_0 \cong_{\text{st}} M_1$, if there exists a non-negative integer~$g$ and a diffeomorphism
\[ M_0 \# W_g \to M_1 \# W_g. \]
Note that $S^{q} \times S^{q}$ admits an orientation-reversing diffeomorphism.
Hence the same is true of $W_g$ and it follows that when the $M_i$ are orientable the diffeomorphism type of the
connected sum does not depend on orientations.

A paradigm of modified surgery, as developed by Kreck~\cite{KreckSurgeryAndDuality}, is that one first seeks to
classify $2q$-manifolds up to stable diffeomorphism, and then for each $M_0$ one tries to
understand its \emph{stable class}:
\[ \SC(M_0) := \{M_1  \mid M_1 \cong_{\text{st}} M_0 \}/\text{diffeomorphism.}\]
The efficacy of this method was first demonstrated by Hambleton and Kreck, who applied it to $4$-manifolds with finite fundamental group in a series of papers~\cite{HambletonKreckFiniteGroup, HambletonKreckSmoothStructures, HambletonKreckCancellation2, HambletonKreckCancellation3}.

On the other hand, the Browder-Novikov-Sullivan-Wall surgery exact sequence~\cite{WallSurgery} aims instead to classify manifolds within a fixed homotopy class.  In general there is no obvious relationship between homotopy equivalence and stable diffeomorphism, although in some 
cases there are implications e.g.~\cite{DavisBorelNovikov}.
To enable a comparison between the two approaches, we define the {\em homotopy stable class} of $M_0$ to be
\[ \SC_{\rm h}(M_0)=\lbrace M_1 \mid M_1
\cong_{\text{st}} M_0 \rbrace / \text{homotopy equivalence}. \]
Our aim is to investigate the cardinality of  $\SC_{\rm h}(M_0)$, and in particular we shall exhibit
new examples of simply-connected manifolds with arbitrarily large homotopy stable class.

\medskip

Throughout this article we will consider closed, connected, 
simply-connected, smooth manifolds.
In order to define the intersection form and related invariants we orient all manifolds.
When necessary, to achieve unoriented results, we will later factor out by the effect of the choice of orientation.

When the dimension is $4k{+}2$, Kreck showed that the stable class of such manifolds is trivial~\cite[Theorem D]{KreckSurgeryAndDuality}.
We therefore focus on dimensions $4k$ with $k>1$ (dimension 4 will be discussed separately below).
 Kreck also showed that for every such simply-connected manifold $M^{4k}$, the stable class of $M^{4k} \# W_1$ is trivial. But as pointed out by Kreck and Schafer~\cite[I]{KreckSchafer}, for $k \! > \! 1$ examples of closed, simply-connected $(2k{-}1)$-connected $4k$-manifolds $M$ with arbitrarily large homotopy stable class
have been implicit in the literature since Wall's classification of these manifolds up to
the action of the group of homotopy spheres \cite{Wall-classification-n-1-conn-2n}.
These examples are distinguished by their intersection form
\[ \lambda_M \colon H_{2k}(M; \Z) \times H_{2k}(M; \Z) \to \Z,\]
which must be definite (in order to have inequivalent forms) and in order to realise the forms by closed, almost-parallelisable manifolds they must have signature divisible by $8 |bP_{4k}|$, where $|bP_{4k}|$ is the order of the group of homotopy $(4k{-}1)$-spheres which bound parallelisable manifolds~\cite[Corollary~on~p.~457]{Kervaire-Milnor-ICM}.

In this paper we consider examples where the intersection form is isomorphic to the standard hyperbolic form
\[ H^+(\Z) = \left(\Z^2, \twotwo{0}{1}{1}{0} \right)   \]
and where there is an additional invariant, a homomorphism $f \colon \Z^2 \to \Z$.
The pair $(H^+(\Z),f)$ is an example of an \emph{extended symmetric form}; see Definition~~\ref{defn:extended-symm-form}.
The isometries of the rank two hyperbolic form are highly restricted: they are generated by switching the two basis vectors and multiplying both basis vectors by $-1$.  As such the unordered pair
\[\{a,b\} := \{f(1,0),f(0,1)\}/(\pm 1),\]
considered up to  multiplication of both integers by $-1$, gives an invariant of the isometry class of the extended symmetric form $(H^+(\Z),f)$.
On the other hand, in the Witt class, or stable equivalence class,
only the divisibility $d:= \gcd(a,b)$ and the product $A=ab$ are invariants.
Since a fixed number $A$ can often be factorised in many ways as a product of coprime integers $a,b$, if we can define a suitable $f$, this simple algebra has the chance to detect large stable classes.
In the proof of our first main theorem, we will define such an $f$ using 
the cohomology ring of the manifolds we construct.

\begin{theorem} \label{thm:LargeStableClassIntro}
Fix positive integers $n$ and $k \geq 2$.
There are infinitely many stable diffeomorphism classes of closed, smooth, simply-connected $4k$-manifolds
$\{[M_i]_{\rm st} \}_{i=1}^\infty$,
such that $|\SC_h(M_i)| \geq n$.
Moreover $\SC_h(M_i)$ contains a
subset $\{M^j_i\}_{j=1}^n$ of cardinality $n$, where $M^1_i = M_i$, and each $M^j_i$ is stably parallelisable and has hyperbolic intersection form.
%
\end{theorem}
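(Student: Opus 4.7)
The plan implements the algebraic picture sketched just before the theorem. For suitable pairs of integers $(a,b)$ I would construct a closed, smooth, simply-connected, stably parallelisable $4k$-manifold $M(a,b)$ with hyperbolic intersection form $H^+(\Z)$, equipped with a distinguished homomorphism $f_{M(a,b)} \colon H_{2k}(M(a,b);\Z)\to\Z$ built from the cohomology ring of $M(a,b)$, taking the values $a$ and $b$ on a fixed hyperbolic basis. Because cohomology rings are preserved by homotopy equivalences (up to the action of automorphisms of $H_{2k}$), $f_{M(a,b)}$ is then automatically a homotopy invariant of $(M(a,b),\lambda)$, and by the discussion preceding the theorem the isometry class of the extended form $(H^+(\Z),f)$ recovers the unordered pair $\{a,b\}/(\pm 1)$. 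Granted this, the theorem is proved as follows: for each integer $A$ with at least $n$ coprime unordered factorizations $A = a_j b_j$, set $M_i^j := M(a_j,b_j)$; then the $M_i^j$ are pairwise homotopy inequivalent, while Kreck's modified surgery should show that they are mutually stably diffeomorphic because their extended forms have equal Witt invariants $(d,A) = (\gcd(a_j,b_j), a_j b_j)$. Infinitely many integers $A$ admit $\geq n$ such factorizations (e.g.\ squarefree $A$ with many prime factors), producing the required infinite sequence $\{[M_i]_{\mathrm{st}}\}$.

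\textbf{The construction and the invariant.} A natural candidate for $M(a,b)$ is a surgery modification of a simply-connected spin model such as $S^2 \times \mathbb{CP}^{2k-1}$, or a connected sum of $S^2 \times S^{2k-2} \times S^{2k}$ with copies of $W_g$, chosen so that $H^2(M(a,b);\Z) \cong \Z$ is generated by a class $x$ whose cup powers interact with a hyperbolic basis $(e_1,e_2)$ of $H^{2k}(M(a,b);\Z)$ in a way that encodes $a$ and $b$. Concretely, $f$ can be defined purely ring-theoretically, e.g.\ as $f(\alpha) := \langle \alpha \cup x^{k},[M(a,b)]\rangle$ for the distinguished generator $x \in H^2(M(a,b);\Z)$, producing a linear functional whose values on $(e_1,e_2)$ are the desired pair $(a,b)$. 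Stable parallelisability is to be ensured by performing framed surgeries during the construction; these surgeries can simultaneously be used to realise any prescribed $(a,b)$ without disturbing the hyperbolic intersection form.

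\textbf{Main obstacle: stable classification.} The hardest step is to show that $M_i^j$ and $M_i^{j'}$ are stably diffeomorphic whenever $(d_j,A_j) = (d_{j'},A_{j'})$. I would apply Kreck's stable diffeomorphism classification: the family $\{M(a,b)\}$ should share a common normal $2$-type $B\to B\mathrm{SO}$, determined by the agreeing $2$-truncated Postnikov towers (here $\pi_1 = 0$, $\pi_2 \cong \Z$, and stable triviality of the tangent bundle), after which $M(a,b)$ is classified up to stable diffeomorphism by its $B$-bordism class in $\Omega_{4k}(B)$. The bordism computation must then reduce this class to the Witt class of $(H^+(\Z),f)$, which by the remarks above depends only on $(d,A)$; once in place, pairs with equal Witt invariants become $B$-bordant and hence stably diffeomorphic. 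Carrying out this bordism identification — identifying $B$ precisely, computing $\Omega_{4k}(B)$ in the required degrees, and matching the relevant characteristic numbers with the algebraic Witt invariants — is the crux of the argument and the step where the bulk of the technical work is expected to lie.
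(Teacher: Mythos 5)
Your algebraic skeleton is the same as the paper's: define a functional on $H_{2k}$ by pairing against the $k$-th cup power of the degree-two generator, so that the extended form $(H^+(\Z),f)$ records an unordered coprime pair $\{a,b\}$, and then get $n$ homotopy types inside one stable class from the $\geq n$ coprime factorisations of a fixed product $A$. But both load-bearing steps are left undone, and as set up one of them would fail. First, the construction: neither suggested model works as stated. $S^2\times\mathbb{CP}^{2k-1}$ is not stably parallelisable, so it cannot be the input to framed surgery; and for connected sums built from products of spheres the degree-two generator satisfies $x^2=0$, so your proposed functional $\alpha\mapsto\langle\alpha\cup x^k,[M]\rangle$ is identically zero -- you never explain how to arrange $x^k=ae_1^*+bf_1^*$ on a hyperbolic basis. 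Producing such a manifold is precisely the content of the paper's Proposition~\ref{prop:ManifoldNab}: start from the framed manifold $(S^2)^{\times 2k}$ with the class $\beta_0=\sum p_i^*(\alpha)$, for which $\beta_0^{2k}=(2k)!\,[X_0]^*$, take $j$-fold connected sums to make the top self-intersection equal $2ab$ (this is where the divisibility hypothesis $(2k)!\mid 2ab$ enters, a realisability constraint your proposal never records and which restricts the admissible $A$), surger to a $2k$-connected map to $\mathbb{CP}^{\infty}$, and then exhibit $u=\mathrm{PD}(\beta^*(z_\infty^k))$ inside an explicit hyperbolic summand with $u=ae_1+bf_1$ (using $ad-bc=1$) before surgering away the orthogonal complement.

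Second, the stable diffeomorphism step, which you correctly flag as the crux, is misformulated and not resolved. For $4k$-manifolds with $k\geq 2$, Kreck's theorem controls stable diffeomorphism by bordism over the normal $(2k-1)$-type, not the normal $2$-type; and even with the correct fibration (here essentially $\mathbb{CP}^{\infty}\times BO\langle 2k\rangle\to BO$) the bordism group carries more than the Witt class of the extended form, so the identification ``bordism class $=$ Witt invariants $(d,A)$'' is exactly what would have to be proved and is not obviously true. The paper avoids this computation entirely: for a fixed product $ab=a'b'$, both $N_{a,b}$ and $N_{a',b'}$ are obtained from one and the same intermediate manifold $X_3$ by surgering away stably hyperbolic orthogonal complements, so after finitely many stabilisations each becomes diffeomorphic to $X_3\# W_h$, giving the stable diffeomorphism directly, while the converse uses only that $\langle z^{2k},[\,\cdot\,]\rangle$ is preserved under stable diffeomorphism. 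A smaller gap: you must also rule out orientation-reversing homotopy equivalences before concluding that $\{a,b\}$ is a homotopy invariant; the paper does this with Lemma~\ref{lem:OriReversal}, using $z^{2k}\cap[N]=2ab>0$.
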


Here \emph{stably parallelisable} means that the tangent bundle becomes trivial after taking the Whitney sum with a trivial bundle of sufficiently high rank.  More than one notion of stabilisation appears in this article, one for manifolds and one for vector bundles.

Kreck and Schafer~\cite{KreckSchafer} constructed examples of $4k$-manifolds $M$ with nontrivial finite fundamental groups, such that the homotopy stable class of $M$ contains distinct elements with hyperbolic intersection forms.  However as far as we know our construction gives the first simply-connected examples and the first for which the homotopy stable class has been shown to have arbitrary cardinality.
In a companion paper \cite{CCPS2}, we will investigate the homotopy stable class in more detail, also for manifolds with nontrivial fundamental group, and we shall relate the homotopy stable class to computations of the $\ell$-monoid from~\cite{CrowleySixt}.

The manifolds we construct in order to prove Theorem~~\ref{thm:LargeStableClassIntro} are shown to be homotopically inequivalent using their cohomology rings.  An alternative construction to obtain nontrivial homotopy stable class instead uses Pontryagin classes to define the homomorphism $f$ in an extended symmetric form. This was alluded to in~\cite{KreckSchafer}, but not carried through.
Section~\ref{s:3c8m-thm} proves a theorem which implies the following result.

\begin{theorem}\label{thm:3-conn-8-manifolds-examples}
For every $m \geq 1$ there exists a pair of closed, smooth, $(4m{-}1)$-connected $8m$-manifolds $M_1$ and $M_2$ with hyperbolic intersection forms, that are stably diffeomorphic but not homotopy equivalent.
\end{theorem}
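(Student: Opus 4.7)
The plan is to realise the strategy outlined for Theorem~\ref{thm:LargeStableClassIntro}, but with the extended-form invariant $f$ built from Pontryagin classes rather than from the cohomology ring. For a $(4m{-}1)$-connected $8m$-manifold $M$ with hyperbolic intersection form $H^+(\Z)$, one defines
\[
f_M := p_m(M) \in H^{4m}(M;\Z) \cong \Hom(H_{4m}(M;\Z),\Z),
\]
giving an extended symmetric form $(H^+(\Z), f_M)$. The key point is that its isometry class should be a homotopy invariant of $M$: for $m=1$ this follows from Wall's homotopy classification of $3$-connected $8$-manifolds, while for $m \geq 2$ one can argue using the Hirzebruch signature formula (which, since $\lambda_M$ is hyperbolic, gives $L_m(p_1,\dots,p_m)[M] = 0$) together with the fact that in this $(4m{-}1)$-connected hyperbolic rank-$2$ setting the cohomology ring is fixed — it is isomorphic to that of $S^{4m} \times S^{4m}$ — leaving $p_m$ as the only possible additional homotopy invariant.

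The construction uses plumbing. For integers $a,b$, plumb two linear $D^{4m}$-bundles over $S^{4m}$ with $p_m$-values proportional to $a$ and $b$, producing an $8m$-manifold $P(a,b)$ with hyperbolic intersection form and $(f(e_1),f(e_2))$ proportional to $(a,b)$. Cap off the boundary (a homotopy $(8m{-}1)$-sphere under appropriate choices, glued in via a parallelisable manifold bounding the exotic sphere if necessary) to obtain a closed $(4m{-}1)$-connected manifold $N(a,b)$ with the prescribed extended form. Fix distinct primes $p,q$ and set $M_1 := N(1,pq)$, $M_2 := N(p,q)$. Since $\gcd(1,pq) = \gcd(p,q) = 1$ and $1 \cdot pq = p \cdot q$, the Witt invariants of the two extended symmetric forms agree; Kreck's modified surgery then shows $M_1 \cong_{\text{st}} M_2$, as stabilising each by $W_1$ adjoins a hyperbolic summand on which $f$ vanishes, and within the enlarged form the factorisation $1 \cdot pq$ can be replaced by $p \cdot q$ via an isometry of the enlarged hyperbolic form.

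To see that $M_1 \not\simeq M_2$, observe that the isometry group of $H^+(\Z)$ is generated by the coordinate swap and simultaneous negation, so the unordered pair $\{f(e_1), f(e_2)\}/\pm 1$ is a complete isometry invariant; and $\{1, pq\} \neq \{p,q\}$ by unique factorisation. Thus the extended symmetric forms of $M_1$ and $M_2$ are non-isometric, and by the asserted homotopy invariance these manifolds are not homotopy equivalent. The main obstacle is justifying this homotopy invariance: integral Pontryagin classes are generally not homotopy invariants, so one must exploit the signature-zero constraint coming from the hyperbolic intersection form together with the rigidity of the cohomology ring for these highly connected manifolds to pin the integral $p_m$ data down up to the isometry action. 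Establishing this is presumably the heart of the theorem referenced from Section~\ref{s:3c8m-thm}.
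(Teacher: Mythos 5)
There is a genuine gap, and it sits exactly where you flag it: the claimed homotopy invariance of the \emph{integral} extended form $(H^+(\Z),\,p_m)$ is false, not merely unproven. For $(4m{-}1)$-connected $8m$-manifolds Wall's classification (Lemma~8 of \cite{Wall-classification-n-1-conn-2n}, used in Section~\ref{s:3c8m-thm} as Theorem~\ref{thm:class}) says that the complete homotopy invariant is the intersection form together with the \emph{homotopy} obstruction class $J_{4m-1}\circ S\alpha_M \colon H_{4m}(M)\to \Z/j_m$, i.e.\ only the image of the integral data under the stable $J$-homomorphism, taken modulo $j_m$, survives as a homotopy invariant; the full integral class $S\alpha_M$ (equivalently $p_m$, up to a fixed factor) is an \emph{almost-diffeomorphism} invariant, not a homotopy invariant. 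Your fallback argument for $m\geq 2$ (signature formula plus ``the cohomology ring is rigid, so $p_m$ is the only possible extra invariant'') does not establish invariance of the integral class, and it cannot: this is precisely why the paper's Remark after Theorem~\ref{thm:3-conn-8-manifolds-examples} notes that the size of the homotopy stable class detected this way is bounded by the number of primes dividing $j_m$. Concretely, your choice $M_1=N(1,pq)$, $M_2=N(p,q)$ with arbitrary distinct primes fails: choosing $p\equiv q\equiv 1 \pmod{j_m}$ makes the two homotopy extended symmetric forms (values in $\Z/j_m$, up to sign and swap) isometric, so by the ``if'' direction of Wall's classification the two manifolds \emph{are} homotopy equivalent. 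The paper instead chooses the pair $\{a,b\}$ so that distinct primes dividing $\overline{\jmath}_m$ are distributed differently between the two entries, and distinguishes the forms by reducing modulo such primes.

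A secondary issue is the capping-off step. The boundary of the plumbing $W_{a,b}$ is the homotopy sphere $ab\,\Sigma_Q$ (Lemma~\ref{lem:Sigma_Q}), and the paper imposes $\mathfrak{bp}_m \mid ab$ so that this boundary is standard and can be closed with a disc, preserving $(4m{-}1)$-connectivity and the rank-two hyperbolic form. Your alternative of gluing in ``a parallelisable manifold bounding the exotic sphere'' would add middle-dimensional homology and change the intersection form, so it is not available; with $\{1,pq\}$ or $\{p,q\}$ and no divisibility hypothesis the closed manifold need not exist as described. The stable diffeomorphism half of your argument is in the right spirit (it matches the paper's use of Kreck's modified surgery, implemented via Theorem~\ref{thm:stable-diffeo-classification-8-manifolds} and a careful choice of the capping diffeomorphisms to absorb homotopy $8m$-spheres), but the non-homotopy-equivalence half needs to be rebuilt around the $\Z/j_m$-valued invariant rather than the integral Pontryagin class.
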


Compared with the manifolds from Theorem~\ref{thm:LargeStableClassIntro} (for even $m$, in the notation of that theorem), the manifolds $M_1$ and $M_2$ from Theorem~\ref{thm:3-conn-8-manifolds-examples} are not stably parallelisable, but on the other hand since they are $(4m{-}1)$-connected and have the same intersection pairing, their cohomology rings are isomorphic. In particular, once again the intersection form does not help.

To show that the manifolds in Theorem~\ref{thm:3-conn-8-manifolds-examples} are not homotopy equivalent,
we use Wall's homotopy classification of $(4m{-}1)$-connected $8m$-manifolds \cite[Lemma 8]{Wall-classification-n-1-conn-2n}, which makes use of an extended symmetric form $(H^+(\Z),f \colon \Z^2 \to \Z/j_m)$,
where $j_m$ is the order of the image of the stable
$J$-homomorphism $J \colon \pi_{4m-1}(SO) \to \pi^s_{4m-1}$; see Section~\ref{s:3c8m-thm}.

\begin{remark}
The limiting factor preventing us from exhibiting arbitrarily large homotopy stable classes in Theorem~\ref{thm:3-conn-8-manifolds-examples} is that our lower bound depends only on the number of primes dividing $j_m$. This grows with $m$, but in a fixed dimension cannot be made arbitrarily large. On the other hand, if we instead count diffeomorphism classes, then we show in  Theorem~\ref{thm:4k-1-conn-8k-manifolds-examples}~\eqref{item-thm-4k-1-conn-8k-i} that the stable class can be arbitrarily large for $(4m{-}1)$-connected $8m$-manifolds with hyperbolic intersection forms.
\end{remark}

\subsection*{Dimension 4}
Dimension 4 was absent from  the above discussion.
This is because closed, smooth, 
simply-connected 4-manifolds $M$ and $N$ are stably diffeomorphic if and only if they are homotopy equivalent. Here is an outline of why this holds. First, two such 4-manifolds are stably diffeomorphic if and only if there are orientations such that they have the same signatures, Euler characteristics, and $w_2$-types i.e.\ $\sigma(M)=\sigma(N)$, $\chi(M)=\chi(N)$, and their intersection forms have the same parity (even or odd). Thus homotopy equivalence implies stable diffeomorphism. For the other direction, $\sigma(M)=\sigma(N)$ and $\chi(M)=\chi(N)$ implies that the intersection forms are either both definite or both indefinite. In the definite case, the intersection forms must be diagonal by Donaldson's theorem~\cite{Donaldson}, and so the intersections forms are isometric and therefore the manifolds are homotopy equivalent~\cite{Whitehead-4-complexes,Milnor-simply-connected-4-manifolds}. In the indefinite case, the intersection form is determined up to isometry by its rank, parity, and signature, and so again~$M$ and~$N$ are homotopy equivalent. Thus the assumption that $k \geq 2$ was essential in Theorem~\ref{thm:LargeStableClassIntro}.

One way in which an analogous phenomenon does occur in dimension 4 is by considering spin$^{c}$ structures. Seiberg-Witten invariants of 4-manifolds and Heegaard-Floer cobordism maps are indexed by spin$^{c}$ structures.
The first Chern class $c_1$ of the spin$^{c}$ structure then defines the map $f$ in the extended symmetric forms.
We illustrate this in Section~\ref{section:spin-c-structures}, using the 4-manifold $S^2 \times S^2$.

\begin{theorem}\label{thm:spin-c-s2xs2-intro}
   Let $C \in \Z$ with $|C| \geq 16$ and $8 \mid C$. Define $P(C)$ to be the number of distinct primes dividing $C/8$.
  There are $n := 2^{P(C)-1}$ stably equivalent spin$^{c}$\hspace{-1pt} structures $\s_1,\dots,\s_n$ on $S^2 \times S^2$  with $c_1(\s_i)^2 = C \in H^4(S^2 \times S^2) \cong \Z$, that are all pairwise inequivalent.
\end{theorem}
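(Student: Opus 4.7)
The plan is to classify spin$^{c}$ structures on $S^2\times S^2$ via their first Chern class, produce one such structure $\s_{\alpha,\beta}$ for each coprime factorisation $C/8 = \alpha\beta$, and then verify that these are pairwise inequivalent but become equivalent after a single stabilisation by $(S^2\times S^2,\mathfrak{t}_0)$, where $\mathfrak{t}_0$ is the spin$^{c}$ structure with vanishing first Chern class. Since $S^2\times S^2$ is simply-connected and spin, the map $\s\mapsto c_1(\s)$ identifies spin$^{c}$ structures with characteristic elements of the intersection form; writing $H^2(S^2\times S^2;\Z)=\Z e_1\oplus \Z e_2$ in the standard hyperbolic basis, these are exactly the even classes $c_1 = 2\alpha e_1 + 2\beta e_2$, of square $8\alpha\beta$. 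The condition $c_1(\s)^2 = C$ thus becomes $\alpha\beta = C/8$, and $\s_{\alpha,\beta}$ is defined by the corresponding $c_1$. The hypothesis $|C|\geq 16$ gives $|C/8|\geq 2$, so that $\alpha\neq\pm\beta$ in any coprime factorisation, and an elementary count---each prime dividing $C/8$ must be assigned entirely to $\alpha$ or to $\beta$---yields exactly $2^{P(C)-1}$ coprime factorisations modulo the swap $(\alpha,\beta)\leftrightarrow(\beta,\alpha)$ and the overall sign.

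Pairwise inequivalence then follows almost immediately: two spin$^{c}$ structures on $S^2\times S^2$ are equivalent precisely when their $c_1$'s lie in the same orbit of the isometry group of $H^+(\Z)$, which is generated by the swap $e_1\leftrightarrow e_2$ and $-\id$; by construction our coprime factorisations occupy distinct orbits under this group.

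The main obstacle is proving stable equivalence. My plan is to work in the rank-$4$ hyperbolic lattice $H^+(\Z)\oplus H^+(\Z)$, which is the intersection form of $\#_2(S^2\times S^2)$. For each coprime factorisation the vector $v_{\alpha,\beta} := (\alpha,\beta,0,0)$ is primitive and has self-intersection $2\alpha\beta = C/4$, independent of the factorisation. By a classical transitivity theorem for the isometry group of an indefinite unimodular lattice of rank at least three, there exists an isometry $\phi$ of $H^+(\Z)\oplus H^+(\Z)$ sending $v_{\alpha,\beta}$ to $v_{\alpha',\beta'}$; doubling, $\phi$ also carries $c_1(\s_{\alpha,\beta}\#\mathfrak{t}_0)=2v_{\alpha,\beta}$ to $c_1(\s_{\alpha',\beta'}\#\mathfrak{t}_0)$. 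Wall's realisation theorem then promotes $\phi$ to a self-diffeomorphism of $\#_2(S^2\times S^2)$, yielding the desired equivalence of the stabilised spin$^{c}$ structures. The principal technical points to verify will be the precise hypotheses of the transitivity theorem (primitivity, common nonzero square, indefiniteness of the rank-$4$ lattice) and the matching of this lattice-theoretic output with the framework of stable equivalence of spin$^{c}$ structures set up in Section~\ref{section:spin-c-structures}.
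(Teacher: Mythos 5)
Your construction of the structures, the coprime-factorisation count, and the inequivalence argument (the four-element isometry group of $H^+(\Z)$ together with naturality of $c_1$, i.e.\ Lemma~\ref{lemma:c1-invariant}) coincide with the paper's proof of Theorem~\ref{thm:stable-cx-s2xs2}. Where you genuinely diverge is the crux, stable equivalence. The paper quotes Kreck's stable diffeomorphism theorem \cite[Theorem~C]{KreckSurgeryAndDuality} together with the Stong--Freedman--Kirby computation $\Omega_4^{\Spinc}\cong\Z\oplus\Z$ via $(\sigma,\tfrac18(c_1^2-\sigma))$, which yields the if-and-only-if criterion of Theorem~\ref{thm:stable-classification} ($c_1(\s_1)^2=c_1(\s_2)^2$) for arbitrary closed simply-connected $4$-manifolds and then applies it verbatim. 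You instead argue by hand after a single stabilisation: Eichler/Wall transitivity of the orthogonal group of $H^+(\Z)\oplus H^+(\Z)$ on primitive vectors of a fixed square (the hypotheses do hold: $(\alpha,\beta,0,0)$ is primitive by coprimality, the common square is $C/4$, and the lattice contains two orthogonal hyperbolic planes and is unimodular), Wall's 1964 theorem realising every isometry of the intersection form of $N\#(S^2\times S^2)$ by an orientation-preserving self-diffeomorphism, and the fact that $c_1$ is a complete invariant of homotopy classes of spin$^{c}$ structures on a simply-connected $4$-manifold (Lemma~\ref{spinc-lemma}, using that $H^2$ is torsion-free). This is sound, and it buys something the paper's route does not make explicit: a concrete bound, namely that one stabilisation already suffices, with no bordism computation or modified surgery; the cost is reliance on the lattice transitivity and realisation theorems, and the conclusion is tailored to the example rather than giving the general stable classification statement. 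One small point to tidy: the paper's notion of stable equivalence uses a fixed but unspecified reference structure $\s_g$ on $W_g$, not necessarily your $\mathfrak{t}_0$ with $c_1=0$; this is harmless, since if $c_1(\s_1)=2(s,t)$ on $W_1$ the stabilised Chern classes become $2(\alpha,\beta,s,t)$, still primitive of equal square, so the identical Eichler/Wall argument goes through.
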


\subsection*{Organisation}

Section~\ref{section:proof-4k-manifolds-thm} proves Theorem~\ref{thm:LargeStableClassIntro},
Section~\ref{s:3c8m-thm} proves Theorem~\ref{thm:3-conn-8-manifolds-examples}, and
Section~\ref{section:spin-c-structures} proves Theorem~\ref{thm:spin-c-s2xs2-intro}.

\subsection*{Conventions}
Throughout this paper all manifolds are compact, simply-connected, and smooth.
As mentioned above we will also equip our manifolds with an orientation.
For the remainder of this paper all (co)homology groups have integral coefficients.  We write $\mathbb{N}_0 := \mathbb{N} \cup \{0\}$.

\subsection*{Acknowledgements}
We would like to thank Manuel Krannich for advice about the homotopy sphere $\Sigma_Q$,
Jens Reinhold for comments on an earlier draft of this paper, and Csaba Nagy for pointing out
a mistake in a previous version of the proof of Theorem~\ref{thm:stable-classification}.

MP is grateful to the Max Planck Institute for Mathematics in Bonn, where he was a visitor while this paper was written.
MP was partially supported by EPSRC New Investigator grant EP/T028335/1 and EPSRC New Horizons grant EP/V04821X/1.

\section{Simply-connected~$4k$-manifolds with arbitrarily large stable class}\label{section:proof-4k-manifolds-thm}

We prove Theorem~\ref{thm:LargeStableClassIntro} by stating and proving Proposition~\ref{prop:ManifoldNab} below.  In the proposition, we construct a collection of $4k$-manifolds $N_{a,b}$, for each unordered pair of positive integers $\{a, b\}$ such that $(2k)!$ divides~$2ab$.   If $\{a,b\} \neq \{a',b'\}$, 
then $N_{a,b}$ and $N_{a',b'}$ are not homotopy equivalent. On the other hand $ab = a'b'$ if and only if $N_{a,b}$ and $N_{a',b'}$ are stably diffeomorphic.  Moreover every manifold $N_{a,b}$ is closed, simply-connected, has hyperbolic intersection form, and is stably parallelisable.  Thus the proposition immediately implies Theorem~\ref{thm:LargeStableClassIntro}.

First we have a lemma. In order to rule out orientation-reversing homotopy equivalences, we shall appeal to the following observation.

\begin{lemma}
\label{lem:OriReversal}
Let~$N$ and~$N'$ be closed, oriented~$4k$-manifolds.
Suppose that a class~$z$ freely generates~$H^2(N)$ and satisfies that $z^{2k}=n$ for some
nonzero~$n\in \Z = H^{4k}(N)$, and similarly for $(N',z')$.
Then any homotopy equivalence~$f \colon N \to N'$ must be orientation preserving.
\end{lemma}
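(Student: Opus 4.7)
The plan is to extract the degree of $f$ from the graded ring structure on cohomology. Since $f$ is a homotopy equivalence, the induced map $f^* \colon H^*(N') \to H^*(N)$ is a graded ring isomorphism. In degree two, both $H^2$ groups are infinite cyclic with specified generators $z$ and $z'$, so we must have $f^*(z') = \epsilon z$ for some $\epsilon \in \{\pm 1\}$. The crucial observation is that the exponent $2k$ is even: raising to the $2k$-th power eliminates this sign, giving
\[
f^*(z'^{2k}) \;=\; (\epsilon z)^{2k} \;=\; z^{2k} \;=\; n \in H^{4k}(N).
\]

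On the other hand, using the orientations to identify $H^{4k}(N) \cong \Z \cong H^{4k}(N')$, the map $f^*$ in degree $4k$ is multiplication by $\deg(f) \in \{\pm 1\}$, where by definition $\deg(f) = +1$ corresponds to the orientation-preserving case. Since $z'^{2k}$ corresponds to the nonzero integer $n$ under the identification on $N'$ (as supplied by the ``similarly for $(N', z')$'' hypothesis), we have $f^*(z'^{2k}) = \deg(f) \cdot n$. Equating this with the ring computation yields $n = \deg(f) \cdot n$, and since $n \neq 0$ this forces $\deg(f) = +1$, i.e.\ $f$ is orientation preserving.

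The only conceptual subtlety, and precisely what makes the argument succeed, is that the sign $\epsilon$ on $H^2$ is genuinely not pinned down by the homotopy equivalence: nothing forces $f^*(z') = z$ rather than $-z$. The parity of $2k$ is the key input, ensuring that $\epsilon$ drops out on passage to top cohomology, so the orientation behaviour of $f$ is read off entirely from the top-dimensional comparison.
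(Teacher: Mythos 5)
Your proof is correct and takes essentially the same approach as the paper: the paper's chain of cap-product/projection-formula identities is precisely the standard fact you invoke, namely that under the orientation identifications $f^*$ acts on $H^{4k}$ as multiplication by $\deg(f)$, combined with $f^*(z')=\pm z$ and the evenness of the exponent $2k$ to kill the sign. Nothing further is needed.
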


\begin{proof}
Assume that~$f$ is of degree~$\varepsilon=\pm 1$.
Since~$f$ is a homotopy equivalence,~$N$ and~$N'$ have isomorphic cohomology rings.
In particular~$H^2(N')\cong \Z$ is generated by~$z'= (f^{*})^{-1}(z)$.
Since~${z'}^{2k}=n$ in~$H^{4k}(N') \cong \Z$, and~$f^*({z'}^{2k})=z^{2k}$, properties of the cap and cup products show that
$$n=f_*(z^{2k}\cap [N])=f_*(f^*({z'}^{2k}) \cap [N])={z'}^{2k} \cap f_*([N])={z'}^{2k} \cap \varepsilon [N']=\varepsilon n.$$
Since~$n\neq 0$, this implies that~$f$ must be orientation-preserving.
%
\end{proof}

Now we proceed with the construction of the promised manifolds.

\begin{proposition} \label{prop:ManifoldNab}
Fix $k >1$. Given an unordered pair~$\{a, b\}$ of positive coprime integers such that~$(2k)!$ divides~$2ab$, there exists a closed, oriented, $4k$-manifold~$N^{4k}_{a,b}$ with the following properties.
\begin{enumerate}[(i)]
\item The manifold~$N_{a, b}$ is simply-connected and stably parallelisable.
\item The ring~$H^*(N_{a,b})$ has generators~$w, x, y, z$ and~$1$ of degrees~$2k{+}2$,~$2k$,
$2k$,~$2$ and~$0$ respectively,
with~$z^k = ax + by, x^2 = 0 = y^2,2abw=z^{k+1}, xz = bw, yz = aw$ and~$xy$
generates~$H^{4k}(N_{a, b})$.
\end{enumerate}
In particular, the intersection form of~$N_{a, b}$ is hyperbolic and~$z^{2k} = 2ab xy$ is~$2ab$
times a fundamental class of~$N_{a, b}$.
If~$\{a, b\} \neq \{a', b'\}$ then~$N_{a, b}$ and~$N_{a', b'}$ have non-isomorphic integral cohomology rings
and so are not homotopy equivalent. Moreover $ab = a'b'$ if and only if
$N_{a,b}$ and~$N_{a',b'}$ are stably diffeomorphic.
\end{proposition}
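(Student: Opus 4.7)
The plan is to construct $N_{a,b}$ explicitly, read off its integer cohomology ring to establish~(ii), and then argue the two classification claims separately using Lemma~\ref{lem:OriReversal} and Kreck's modified surgery.

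\emph{Construction of $N_{a,b}$.} Inspection of the ring in~(ii) suggests building $N_{a,b}$ from cells in each even dimension, with one cell in dimensions $0,2,\dots,2k{-}2$, two cells (producing $x,y$) in dimension $2k$, and their Poincar\'e duals above. A natural way to realise this is as a modification by middle-dimensional surgery of a standard simply-connected, stably parallelisable $4k$-manifold, with the attaching maps arranged so that $z^k$ equals the combination $ax+by$ rather than being primitive. The divisibility hypothesis $(2k)!\mid 2ab$ should enter as the smoothing obstruction for the $(2k{+}1)$-cell that enforces $z^{k+1}=2abw$: this is a Hopf/James-invariant condition in $\pi_{2k-1}$, with the factor $(2k)!$ arising from the image of the stable $J$-homomorphism and the combinatorics of the $k$-fold Whitehead product. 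Given such a smooth model, the relations $x^2=y^2=0$, $xz=bw$, $yz=aw$, and $z^{2k}=2ab\cdot xy$ are read off cell by cell; simple connectivity is immediate from the absence of $1$-cells, and stable parallelisability can be arranged by choosing framings consistently along the surgery sequence.

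\emph{Homotopy classification.} Suppose $f \colon N_{a,b} \simeq N_{a',b'}$. Applying Lemma~\ref{lem:OriReversal} to $z'$ (using $(z')^{2k}=2a'b'\neq 0$) forces $f$ to be orientation preserving, so $f^*$ is a ring isomorphism. Because $H^2\cong\Z$, $f^*(z')=\varepsilon z$ with $\varepsilon\in\{\pm 1\}$. The restriction of $f^*$ to $H^{2k}$ must preserve the hyperbolic cup pairing (sending $x'y'$ to $xy$) and also satisfy $f^*(x')^2=f^*(y')^2=0$; a short calculation in $GL_2(\Z)$ shows these constraints force $f^*$ to act on $(x,y)$ as $(x,y)\mapsto(\pm x,\pm y)$ or $(x,y)\mapsto(\pm y,\pm x)$, with signs constrained by $\det=+1$. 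Comparing $f^*((z')^k)=\varepsilon^k(ax+by)$ with $a'f^*(x')+b'f^*(y')$ in each case, and using $a,b,a',b'>0$, then forces $\{a,b\}=\{a',b'\}$.

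\emph{Stable classification.} All $N_{a,b}$ are simply connected, stably parallelisable, have signature $0$, and share the same Euler characteristic (computable from~(ii)). By Kreck's modified surgery, $N_{a,b}\cong_{\rm st} N_{a',b'}$ iff they agree in the bordism group of their common normal $2$-type, which for these manifolds is a stably framed version of $\Omega_{4k}(K(\Z,2))$. The unique non-torsion invariant is the characteristic number $\langle z^{2k},[N]\rangle=2ab$, which is preserved under connected sum with $W_g$ because $H^2(M\# W_g)=H^2(M)$ for $k\ge 2$; this yields the ``only if'' direction. For the ``if'' direction, when $ab=a'b'$ one must further match any torsion contribution to the bordism class, which can be handled by exploiting the flexibility of Step~1 (varying framings, or absorbing torsion via additional $\# W_g$) so that both manifolds land in the same bordism class.

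The main obstacle is Step~1: producing an explicit smooth model realising the precise integer ring~(ii) with the given $a,b$, and verifying that $(2k)!\mid 2ab$ is the right smoothing condition. The two classification steps then follow largely formally from Lemma~\ref{lem:OriReversal}, elementary algebra with the rank-two hyperbolic form, and Kreck's stable classification.
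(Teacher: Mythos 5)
The central content of this proposition is the existence of the manifolds, and that is precisely what your Step 1 leaves open: you describe a hoped-for cell structure and say the attaching maps should be ``arranged'' so that $z^k=ax+by$ and $z^{k+1}=2abw$, but no construction is actually given, and you yourself flag this as the main obstacle. Your guess about where $(2k)!\mid 2ab$ enters (a smoothing/Hopf--James obstruction for a $(2k{+}1)$-cell, tied to the $J$-homomorphism and Whitehead products) is not substantiated and is not how the divisibility is used in the paper: there one starts from $X_0=(S^2)^{\times 2k}$ with $\beta_0=\sum_i p_i^*(\alpha)$, so that $\beta_0^{2k}=(2k)!\,[X_0]^*$, takes the framed connected sum $X_1=\#^j X_0$ with $2ab=j(2k)!$, surgers below the middle dimension to get a $2k$-connected normal map $\beta_2\colon X_2\to \CPI$ (framed over $\CPI$, which is what gives stable parallelisability), adds one $S^{2k}\times S^{2k}$, and then -- this is where coprimality of $a,b$ enters, via integers $c,d$ with $ad-bc=1$ -- exhibits a unimodular hyperbolic summand $H_{u,v}\subseteq H_{2k}(X_3)$ containing $u=\mathrm{PD}(\beta_3^*(z_\infty^k))$ as $ae_1+bf_1$, and finally surgers away the orthogonal complement (even, signature $0$, mapping trivially to $\CPI$, hence spherical). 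Without some such construction, items (i)--(ii) are unproven; your homotopy-classification step, which is essentially the paper's argument (Lemma~\ref{lem:OriReversal} plus the fact that the isometries of the rank-two hyperbolic form are $\pm\Id$ and $\pm$ the swap), has nothing to apply to.

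There is also a gap in your ``if'' direction for stable diffeomorphism. First, for stable diffeomorphism of $4k$-manifolds Kreck's theorem requires bordism over the normal $(2k{-}1)$-type, not the normal $2$-type, and you have not identified this type or computed anything about its bordism group. More seriously, your proposed fix for torsion -- ``absorbing torsion via additional $\#W_g$'' -- cannot work: the bordism class over the normal $(2k{-}1)$-type is itself a stable diffeomorphism invariant and is unchanged by connected sum with $W_g$; and ``varying framings'' changes which manifold you construct, so it does not show that the two specific manifolds with the stated cohomology rings and equal $ab$ are stably diffeomorphic. The paper avoids any bordism computation: the construction up to $X_3$ uses only the product $ab$, so for $ab=a'b'$ both $N_{a,b}$ and $N_{a',b'}$ arise from the \emph{same} $X_3$ by surgering away stably hyperbolic orthogonal complements, and since such surgery is inverse to connected sum with copies of $S^{2k}\times S^{2k}$ one gets $N_{a,b}\#W_g\cong X_3\#W_h\cong N_{a',b'}\#W_g$ directly. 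Your ``only if'' argument via the invariance of $\langle z_{st}^{2k},[\,\cdot\,]\rangle=2ab$ under stabilisation is correct and matches the paper.
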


\begin{proof}
Note that if we have a manifold $N_{a,b}$ and if we choose a stable normal framing on~$N_{a, b}$, then the pair~$(N_{a, b}, z)$ corresponds to a (normally) framed manifold over~$\C P^{\infty}$ using the identification~$H^2(N_{a,b}) \cong [N_{a,b},\C P^{\infty}]$.   This motivates the method we shall use, constructing~$N_{a, b}$ by framed surgery on stably normally framed manifolds over~$\C P^{\infty}$. It will then follow automatically that the manifolds we obtain are stably parallelisable, since a manifold with trivial stable normal bundle has trivial stable tangent bundle too.

We start with~$S^2$ together with the unique framing of its stable normal bundle corresponding to a choice of orientation, and consider the corresponding dual orientation class~$\alpha \in H^2(S^2)$.
Take the~$2k$-fold product of~$S^2$ with itself, \[X_{0} : = S^2 \times \cdots \times S^2,\]
and define~$\beta_0 \in H^2(X_{0})$
to be the class that restricts to~$\alpha$ in each~$S^2$ factor. This means that under the inclusion
\[\iota_{j} \colon \{*\} \times \cdots \times S^2 \times \cdots \{*\} \to S^2 \times \cdots \times S^2\] in the ~$j$th factor,~$\iota_j^*(\beta_0) = \alpha$. Equivalently, let~$p_i \colon S^2 \times \cdots \times S^2 \to S^2$ be the~$i$th projection. Then~$\beta_0 = \sum_{i=1}^{2k} p_i^*(\alpha)$.
An elementary calculation shows that
\[ \beta_0^{2k} = (2k)![X_0]^* \in H^{4k}(X_0).\]
Here we write~$[X_0]^* \in H^{4k}(X_0)$ for the dual of the
fundamental class~$[X_0] \in H_{4k}(X_0)$.
To make this calculation, use~$\beta_0 = \sum_{i=1}^{2k} p_i^*(\alpha)$ and note that:
\begin{enumerate}[(i)]
  \item~$p_i^*(\alpha) \cup p_j^*(\alpha) = p_j^*(\alpha) \cup p_i^*(\alpha)$ for~$i \neq j$,
  \item~$p_i^*(\alpha) \cup p_i^*(\alpha) = p_i^*(\alpha \cup \alpha) = p_i^*(0) =0$, and
  \item ~$p_1^*(\alpha) \cup \cdots \cup p_{2k}^*(\alpha) = [X_0]^*$.
\end{enumerate}
By assumption there is a positive integer~$j$ such that~$2ab = j(2k)!$.  Take
$X_1 := \#^j X_0$ to be the framed~$j$-fold connected sum of~$X_0$ and
$\beta_1 \in H^2(X_1)$ to be the class that restricts to~$\beta_0$ in each summand.  That is,~$H^2(X_1) \cong \bigoplus^{j} H^2(X_0)$ and~$\beta_1 = (\beta_0,\dots,\beta_0)$.
Then
$$ \beta_1^{2k} = j\beta_0^{2k} = j(2k)! [X_1]^* =  2ab[X_1]^* \in H^{4k}(X_1).$$

The element $\beta_1 \in H^2(X_1)$ and the normal framing on~$X_1$ defines a normal map~
\[(\beta_1, \ol{\beta}_1) \colon X_1 \to \C P^{\infty},\]
where we
take the trivial bundle over~$\C P^{\infty}$.
By surgery below the middle dimension, the normal map~$(\beta_1, \ol{\beta}_1)$ is normally bordant
to a~$2k$-connected map~$(\beta_2, \ol{\beta}_2) \colon X_2 \to \C P^{\infty}$.
Since~$X_0$ has signature zero, the same holds for~$X_1$ and~$X_2$.
Since the stable normal bundle of~$X_2$ is framed, so is the stable tangent bundle. Therefore the stable tangent bundle has trivial~$2k$-th Wu class vanishes and so the intersection form on~$X_2$ is even.
Let~$z_\infty \in H^2(\C P^\infty)$ be the generator restricting to~$\alpha \in H^2(\C P^1) = H^2(S^2)$ via the inclusion~$\C P^1 \to \C P^{\infty}$,
and consider the Poincar\'e dual of~$\beta_2^*(z_\infty^k)$,
$$ u := \mathrm{PD}(\beta_2^*(z_\infty^k)) \in H_{2k}(X_2).$$
Since~$\beta_2 \colon X_2 \to \C P^{\infty}$ is
$2k$-connected,~$H_{2k}(X_2) \to H_{2k}(\C P^{\infty}) \cong \Z$
is onto and therefore splits since $\Z$ is free.
Since all homology groups are torsion-free, the dual map can be identified with the map $\beta_2^* \colon H^{2k}(\C P^{\infty}) \to H^{2k}(X_2)$ on cohomology. The splitting for $\beta_2$ dualises to a splitting for $\beta_2^*$, so the image of a generator $\beta_2^*(z^k_{\infty})$ generates a summand.  Applying Poincar\'e duality we see that~$u \in H_{2k}(X_2)$ is a primitive element; i.e.\ ~$u$ generates a summand of~$H_{2k}(X_2)$.

We take connected sum with an additional copy of~$S^{2k} \times S^{2k}$ with null-bordant framing
and trivial map to~$\C P^{\infty}$ to obtain \[X_3 := X_2 \# (S^{2k} \times S^{2k})\] and
a normal map~$(\beta_3, \ol{\beta}_3) \colon X_3 \to \C P^{\infty}$.
Note that up until this point we have only used the product $ab$, rather than the data of the pair $\{a,b\}$. This will change for the upcoming construction of $X_4 = N_{a,b}$.

The intersection form $\lambda_{X_3}$ of~$X_3$ has an orthogonal decomposition corresponding to
the connected sum decomposition of~$X_3$:
$$ (H_{2k}(X_3), \lambda_{X_3}) = (H_{2k}(X_2), \lambda_{X_2}) \oplus H^+(\Z),$$
where~$H^+(\Z)$ is the standard symmetric hyperbolic form.
Let~$\{e, f\}$ be a standard basis for~$H^+(\Z)$.
Since $a$ and $b$ are coprime, we may and shall choose integers~$c, d$ such that~$ad-bc = 1$.
We also write~$u = \mathrm{PD}(\beta_3^*(z_\infty^k))$.
Here note that $u$ is essentially the same element as the element $u \in H_2(X_2)$ that we defined above thinking of $H_2(X_2)$ as a subgroup of $H_2(X_3)$.
Keeping this in mind, we have that
\[
\lambda_{X_3}(u, u) = \an{ \beta_3^*(z^k_{\infty}) \cup \beta_3^*(z^k_{\infty}), [X_3]} = \an{ \beta_3^*(z_\infty^{2k}), [X_3] } = \an{z^{2k}_{\infty},(\beta_3)_*[X_3] }  =  2ab,
\]
since~$z_{\infty}^{2k}$ generates~$H^{4k}(\C P^{\infty})$ and since~$(\beta_3)_*$ sends~$[X_3]$ to the same multiple of the generator of~$H_{4k}(\C P^{\infty})$ as~$(\beta_1)_*$ sends~$[X_1]$ to.
Since~$u \in H_{2k}(X_2) \subseteq H_{2k}(X_3)$ is primitive and since $\lambda_{X_2}$ is nonsingular, there is an element~$v'' \in H_{2k}(X_2) \subseteq H_{2k}(X_3)$ such that
\[
 \lambda_{X_3}(u, v'') =\lambda_{X_2}(u,v'')= 1.
 \]
Now set~$v' := (ad + bc)v''$ as well as
\[v : = v' + e + \frac{2 cd - \lambda_{X_3}(v', v')}{2} f.\]
Since $u \in H_2(X_2)$ and $e,f \in H^+(\Z)$, we observe that~$\lambda_{X_3}(u, e) = \lambda_{X_3}(u, f) = 0$.
As a consequence, the elements~$u, v$ span a subspace~$H_{u, v} \subseteq H_{2k}(X_3)$ where~$\lambda_{X_3}$
restricted to~$H_{u, v}$ has matrix
$$A= \twotwo{2ab}{ad + bc}{ad+bc}{2cd},$$
which has determinant~$4abcd - (ad+bc)^2 = -(ad-bc)^2 = -1$.
Hence~$H_{u, v}$ is an orthogonal summand of~$(H_{2k}(X_3), \lambda_{X_3})$ and a calculation shows that~$H_{u, v}$ is hyperbolic with standard basis~$\{e_1, f_1\}$ where
$u = ae_1 + bf_1$ and $v=ce_1 +d f_1$. To see this, let~$P:= \bsm a&b \\ c&d \esm$ and note that~$P \bsm 0&1 \\ 1&0 \esm P^T =A$.

The orthogonal complement of~$H_{u, v}$, namely~$H_{u, v}^\perp$, has signature equal to the signature
of~$X_3$, which is zero and hence since the intersection form is even,~$H_{u, v}^\perp$ is stably hyperbolic.

We assert that~$H_{u, v}^\perp$ maps trivially to~$H_{2k}(\C P^{\infty})$ under $\beta_{3*}$.
To see this, first note that~$H^{2k}(\C P^{\infty}) \cong \Z$, generated by~$z^k_{\infty}$.
We have an isomorphism
\[z^k_{\infty} \cap - \colon H_{2k}(\C P^{\infty}) \xrightarrow{\cong} H_0(\CPI) \cong \Z.\]
Recall that now~$u = \mathrm{PD}(\beta_3^*(z_\infty^k)) \in H_{u,v}$ and let~$x \in H_{u, v}^\perp$. Then
\[0 = \lambda_{X_3}(u,x) = \mathrm{PD}^{-1}(u) \cap x = \beta_3^*(z_\infty^k) \cap x = z_{\infty}^k \cap (\beta_{3})_*(x). \]
Since~$z^k_{\infty} \cap -$ is an isomorphism, this implies that~$(\beta_{3})_*(x)=0$, which proves the assertion.

Now, since~$\beta_3 \colon X_3 \to \C P^{\infty}$ is ~$2k$-connected
and since~$H_{u, v}^\perp$ maps trivially to~$H_{2k}(\C P^{\infty})$, the Hurewicz theorem
and the linked long exact sequences
$$
\xymatrix{
\cdots \ar[r] &
\pi_{2k+1}(\C P^\infty, X_3) \ar[d]^\cong \ar[r] &
\pi_{2k}(X_3) \ar[d] \ar[r] &
\pi_{2k}(\C P^\infty) \ar[d] \ar[r] &
\cdots \\
\cdots \ar[r] &
H_{2k+1}(\C P^\infty, X_3) \ar[r] &
H_{2k}(X_3) \ar[r]^-{(\beta_3)_*} &
H_{2k}(\C P^\infty) \ar[r] &
\cdots
}
$$
show that every element of~$H_{u, v}^\perp$ is represented by a map from a~$2k$-sphere in~$\pi_{2k}(X_3)$.
Hence standard surgery arguments allow us to perform framed surgery on
$(\beta_3, \ol{\beta}_3) \colon X_3 \to \C P^{\infty}$ to kill~$H_{u, v}^\perp$.
We obtain a normal map~$(\beta_4, \ol{\beta}_4) \colon X_4 \to \C P^{\infty}$,
with intersection form isomorphic to~$(H_{u, v}, \lambda_{X_3}|_{H_{u, v}})$.
The manifold \[N_{a,b} : = X_4\]
 is the required manifold, as we verify next.  For the rest of the proof we shall write~$N := N_{a,b}$ for brevity.
 We use the orientation corresponding to the fundamental class $[N_{a,b}]$ induced from tracking $[X_0]$ through the construction.

We have already noted at the beginning of the proof that the construction via normally framed surgery implies that $N_{a,b}$ is stably parallelisable.
As the map~$\beta_4 \colon N_{a,b} \to \C P^\infty$ is~$2k$-connected and since there is an isomorphism
$\theta \colon H_{2k}(N_{a,b}) \to H_{u, v} \cong \Z^2$, the manifold~$N_{a,b}$ is simply-connected and
has the correct integral (co)homology groups.
To verify that~$N_{a,b}$ has the required cohomology ring we set
\[z := \beta_4^*(z_\infty),
\quad x := \mathrm{PD}^{-1}(\theta^{-1}(e_1)),
\quad y := \mathrm{PD}^{-1}(\theta^{-1}(f_1)).\]
Since~$u = ae_1 + b f_1$, it follows that~$z^k = a x + by$.
Since~$\theta^{-1}(e_1), \theta^{-1}( f_1)$ form
a standard hyperbolic basis for~$(H_{2k}(N_{a,b}), \lambda_{N_{a,b}})$,
it follows that~$xy$ generates~$H^{4k}(N_{a,b})$ and $z^{2k} \cap [N_{a,b}] >0$.
Finally, since~$z^{k-1}$ generates~$H^{2k-2}(N_{a,b}) \cong \Z$,
there is a generator~$w \in H^{2k+2}(N_{a,b})$ such that~$z^{k-1}w = xy$.
The remaining properties of~$H^*(N_{a,b})$ follow from Poincar\'e duality.

Finally, let $\an{z^k} \subseteq H^{2k}(N_{a,b})$ be the subgroup generated by~$z^k$
and consider the isomorphism class of the pair~$(H^{2k}(N_{a,b}), \an{z^k})$.
This pair, modulo the action of the self-equivalences of~$N_{a,b}$ on~$H^{2k}(N_{a,b})$,
is a homotopy invariant of~$N_{a,b}$.
Since~$z^{2k} \neq 0$, and since $z^{2k} \cap [N_{a,b}] >0$, every self-homotopy equivalence of $N_{a,b}$
is orientation preserving by Lemma~\ref{lem:OriReversal}.

Thus $\an{z^k}$ modulo the action of~$\mathrm{Aut}(H^+(\Z))$
is a homotopy invariant.  We claim that the pair~$\{a, b\}$ is an invariant of this action.
To see this, from the form of the matrix~$A$ above, it is easy to see that the automorphisms of the hyperbolic form are  \[\pm \Id \text{ and } \pm \twotwo{0}{1}{1}{0}.\]
So automorphisms can change the sign of both $a$ and $b$ simultaneously, and they can switch $a$ and $b$. Then since we always take~$a,b>0$, the unordered pair of positive integers~$\{a,b\}$ is an invariant of the homotopy type. Hence if there is a homotopy equivalence~$N_{a, b} \to N_{a', b'}$, then
we have~$\{a, b\} = \{a', b'\}$.

Now we address the final statement of the proposition, which concerns stable diffeomorphism.
Observe that~$\Z \cong H^2(N_{a,b}) \cong H^2(N_{a,b} \# S^{2k} \times S^{2k})$, and that the image of~$z$, which we call~$z_{st} \in H^2(N_{a,b} \# S^{2k} \times S^{2k})$, satisfies the equality~$z_{st}^{2k} = 2ab[N_{a,b} \# S^{2k} \times S^{2k}]$.  Since this property of $z_{st}$ and the fundamental class are preserved under diffeomorphism, it follows that if~$N_{a,b}$ and~$N_{a',b'}$ are stably diffeomorphic, then~$ab = a' b'$.

On the other hand, for a fixed product $ab = a'b'$, the manifolds $N_{a,b}$ and $N_{a',b'}$ are obtained from the $4k$-manifold $X_3$ by surgering away a stably hyperbolic form $H_{u, v}^\perp$. Recall that $u$ and $v$ depend on $a,b$, so in particular we may need to stabilise a different number of times for $H_{u, v}^\perp$ versus $H_{u', v'}^\perp$ to make them  hyperbolic. Let $h(u,v)$ and $h(u',v')$ be the number of stabilisations required, and let $h:= \max\{h(u,v),h(u',v')\}$.     Then for some $g$ we have
\[N_{a,b} \# W_g \cong X_3 \# W_h \cong N_{a',b'} \# W_g,\]
as desired.  So indeed $ab=a'b'$ if and only if $N_{a,b} \cong_{\text{st}} N_{a',b'}$.
\end{proof}

\section{$(4m{-}1)$-connected $8m$-manifolds with nontrivial homotopy stable class}
\label{s:3c8m-thm}

In this section, for every $m \geq 1$ we construct $(4m{-}1)$-connected $8m$-manifolds
with hyperbolic intersection form and with nontrivial homotopy stable class.
Specifically, we describe certain $8m$-manifolds $M_{a,b}$, for positive integers $a$ and $b$, and we will give bounds from above and below on the size of the homotopy stable class of $M_{a,b}$ in terms of $a$, $b$, and $m$. In particular, for each $m$ there are infinitely many choices of $a,b$ such that $|\HSC(M_{a, b})| >1$.

In contrast to the manifolds in the previous section, the homotopically inequivalent manifolds constructed here have isomorphic integral cohomology rings, but are not stably parallelisable.
We will detect that our manifolds are not homotopy equivalent using a refinement of the $m$th Pontryagin class.

This section is organised as follows.  In Section~\ref{subsection:pertinent-facts} we recall some facts about exotic spheres and the $J$ homomorphism, which we will need for the statement and the proof of Theorem~\ref{thm:4k-1-conn-8k-manifolds-examples}.  We state this theorem in Section~\ref{subsection:statement-of-theorem}.
In Section~\ref{section:almost-smooth-classification} we recall Wall's classification of $(4m{-}1)$-connected $8m$-manifolds up to the action of the group of homotopy $8m$-spheres, then in Section~\ref{ss:stable-classification} we determine the stable classification of such manifolds, again up to the action of the homotopy spheres.
Next, in Section~\ref{subsection:construction} we construct the manifolds $M_{a,b}$ appearing in Theorem~\ref{thm:4k-1-conn-8k-manifolds-examples} and we prove this theorem in Section~\ref{subsection:proof-of-thm-31}.

\subsection{Exotic spheres and the $J$-homomorphism}\label{subsection:pertinent-facts}

Let $\Theta_n$ denote the group of $h$-cobordism classes of \emph{homotopy $n$-spheres}, that is  closed, 
connected, oriented $n$-manifolds that are homotopy equivalent to $S^n$, with the group operation given by connected sum. By \cite{KeMi63} these are finite abelian groups.  We will briefly recall some of what is known about them, focussing on dimensions $n=8m$ and $n=8m{-}1$, for $m\geq 1$.

Recall that $bP_{n+1} \subseteq \Theta_n$ is the subgroup of $h$-cobordism classes of homotopy $n$-spheres which bound parallelisable
$(n{+}1)$-manifolds.  Kervaire and Milnor showed that this is a finite cyclic group, and for $n{+}1 = 4\ell > 4$ the order of $bP_{n+1}$ is given by a formula in terms of Bernoulli numbers and the image of the $J$-homomorphism~\cite{KeMi63}.
Following results of Adams~\cite{Adams:1966-1} and Quillen~\cite{Quillen71} on the $J$-homomorphism,
this formula led to the computation of $|bP_{4\ell}|$; we will give more details shortly.
The group $bP_{4\ell}$ is generated by the boundary of Milnor's $E_8$ plumbing \cite[V]{Bro72},
a $4\ell$-manifold obtained from plumbing disc bundles according to the $E_8$ lattice.

Let \[J_{n} \colon \pi_{n}(SO) \to \pi^s_{n}\] be the stable $J$-homomorphism~\cite[\S3]{Whitehead-defn-J},
where $\pi^s_{n}$ is the stable $n$-stem.
Kervaire and Milnor~\cite{KeMi63} showed that $\Theta_{8m} \cong \coker J_{8m}$ and that there is a short exact sequence
\[0 \to bP_{8m} \to \Theta_{8m-1} \to \coker J_{8m-1} \to 0.\]
Later Brumfiel~\cite{Brumfiel-I} defined a splitting $\Theta_{8m-1} \to bP_{8m}$ and so proved that
\[\Theta_{8m-1} \cong bP_{8m} \oplus \coker J_{8m-1}.\]

Consider a $(4m{-}1)$-connected $8m$-manifold $W$ with boundary $\partial W \in \Theta_{8m-1}$.
Extending work of Stolz~\cite{Stolz-highly-conn-mflds-and-their-boundaries} and
Burklund, Hahn and Senger~\cite{Burklund-Hahn-Senger},
Burklund and Senger~\cite[Theorem 1.2]{Burklund-Senger} proved that $[\partial W] \in bP_{8m}$, except possibly when $m=3$, when they also show that $2[\partial W] \in bP_{24}$.
%
For our purposes later in this section, we also assume that $W$ has signature $0$
and this ensures that $\partial W$ is a multiple of the homotopy sphere denoted $\Sigma_Q$ by
Krannich and Reinhold \cite[\S 2]{Krannich-Reinhold} (see just below Lemma~\ref{lem:Sigma_Q}
for the definition of $\Sigma_Q$.)

\begin{definition}
Let $\mathfrak{bp}_m$ be the order of $\Sigma_Q$ in $\Theta_{8m-1}$.
\end{definition}

\begin{remark}
The precise value of $\mathfrak{bp}_m$ can be calculated, assuming knowledge of the relevant Bernoulli numbers,
from \cite[Lemma~2.7]{Krannich-Reinhold}.
  In particular, $\mathfrak{bp}_m \mid |bP_{8m}|$. This is clear when $m \neq 3$, since $\Sigma_Q \in bP_{8m}$. It follows from a direct calculation when $m = 3$, given that the projection of $\Sigma_Q$ to $bP_{24}$ has order divisible by $2$.
\end{remark}


We now recall some facts about the $J$-homomorphism for context and later use.
We start with the stable
$J$-homomorphism $J_{4m-1} \colon \pi_{4m-1}(SO) \to \pi^s_{4m-1}$ and write
\[j_m := |\im(J_{4m-1})|.\]
%
For example
\[ j_1 = 24, \quad j_2 = 240, \quad \text{and} \quad j_3 =  504.\]
Later we will use the fact that $4 \mid j_m$, for $m = 1, 2$, as we see here.
Since the stable homotopy groups of spheres are finite, so is $j_m$.  Since $\pi_{4m-1}(SO) \cong \Z$, in fact $\im(J_{4m-1}) \cong \Z/j_m$.
By~\cite{Adams:1966-1} (see e.g.\ \cite[Theorem~6.26]{lueck-surgery-intro}), $j_m$ can be 
computed using the denominator of the rational number $B_m/4m$, where $B_m \in \Q$ is the $m$th Bernoulli number, defined by the generating function
\[\frac{e^t}{e^t-1} = 1 - \frac{t}{2} + \sum_{n = 1}^{\infty} \frac{(-1)^{n+1} B_n}{(2n)!} t^{2n}. \]
By \cite[Section~7]{KeMi63}, $|bP_{8m}|/(2^{4m-2} (2^{4m-1} -1))$ equals the numerator of the rational number $2B_{2m}/m$, from which one can compute $|bP_{8m}|$.   

Next we consider the unstable $J$-homomorphism, $J_{4m-1, 4m} \colon \pi_{4m-1}(SO_{4m}) \to \pi_{8m-1}(S^{4m})$,
which, along with the stable $J$-homomorphism, the Euler class $\e$ and the Hopf-invariant $H$, fits into the following commutative diagram with exact rows:
\begin{equation} \label{eq:e=HJ}\tag{*}
\xymatrix{
0 \ar[r] &
\pi_{4m-1}(SO_{4m}) \ar[r]^(0.45){\e \oplus S} \ar[d]^{J_{4m-1, 4m}} &
\Z \oplus \pi_{4m-1}(SO) \ar[d]^{\Id \oplus J_{4m-1}} \ar[r] &
\Z/2 \ar[d]^= \ar[r] &
0
 \\
0 \ar[r] &
\pi_{8m-1}(S^{4m}) \ar[r]^{H \oplus S}   &
\Z \oplus \pi^s_{4m-1} \ar[r] &
\Z/2 \ar[r] &
0
}
\end{equation}
The commutativity of left hand square in
\eqref{eq:e=HJ} is equivalent to the classical statements that $\e = H \circ J_{4m-1, 4m}$
and that the $J$-homomorphism commutes with stabilisation \cite[1.2 \& 1.3]{James-Whitehead-I}.
That $\e \oplus S$ is injective with index $2$ is reviewed in \cite[p.\ 171]{Wall-classification-n-1-conn-2n}.
That the same statements hold for $H \oplus S$ follows from
Toda's calculations in the exceptional cases $m = 1, 2$ \cite[V, (iii) \& (vii)]{Toda} and
from Adam's solution of the Hopf invariant $1$ problem for $m > 2$ \cite{Adams:1960}.
For $m > 2$, both $\e(\pi_{4m-1}(SO_{4m})) \subseteq \Z$ and $H(\pi_{8m-1}(S^{4m})) \subseteq \Z$
are index two subgroups and stabilisation is a split surjection, \cite{Bott-Milnor, Adams:1960}.
In particular this means that for $m > 2$
the Euler class $e$ is always even for rank $4m$ oriented vector bundles over $S^{4m}$.
When $m = 1, 2$, the maps $\e$ and $H$ are both onto and $\e = H \circ J_{4m-1,4m} \equiv S$~mod~$2$~\cite[p.~171]{Wall-classification-n-1-conn-2n} and $H \equiv S$~mod~$2$ by Toda's computations mentioned above.  These computations show that for $m=1$,
$H \oplus S \colon \pi_7(S^4) \cong \Z \oplus \Z/12 \to \Z \oplus \Z/24$ sends $(x,y) \mapsto (x,x+2y)$.
For $m=2$, the map $H \oplus S \colon \pi_{15}(S^8) \cong \Z \oplus \Z/120 \to \Z \oplus \Z/240$ is also given by $(x,y) \mapsto (x,x+2y)$. It follows that $H \equiv S$~mod~$2$ as asserted.

%
\subsection{Estimating $\HSC(M)$}\label{subsection:statement-of-theorem}
In this section we give upper and lower bounds for the homotopy stable class of
certain $(4m{-}1)$-connected $8m$-manifolds.  To state these bounds we require a certain amount
of notation.

Let $m$ be a positive integer and let $\{a, b\}$ be a pair of positive integers.
Since the dimensions $8$ and $16$ are
exceptional, we introduce the factor
\[
c_m :=
\begin{cases}
2 & m = \text{$1$ or $2$,} \\
1 & m > 2,
\end{cases} \]
to handle the exceptional dimensions.  We define
\[d := \gcd(a,b)c_m
\]
and write
\[ac_m = da' \text{ and } bc_m = db'\]
for some coprime $a',b'$.  Set
 \[A := a'b' = abc_m^2/d^2.\]
For a positive integer $n$ we let $\mathcal{P}_n$ be the set of prime factors of $n$:
\[\mathcal{P}_n :=  \{ p \in \mathbb{N}\, : \, p \text{ prime, } p \mid n\}. \]
We set $\ol j_m = j_m/\gcd(j_m, d)$ and consider the sets $\mathcal{P}_{\! A}$, $\mathcal{P}_{\ol j_m}$ and their intersection
\[\mathcal{P}_{\! A, m} :=   \mathcal{P}_{\! A} \cap \mathcal{P}_{\ol j_m},\]
the set of primes dividing both $\ol{j}_m$ and $A$.
We define the non-negative integers
\[q_{A} := |\mathcal{P}_{\! A}| - 1 \text{ and } q_{A,m} : = |\mathcal{P}_{\! A, m}| -1.\]
Now we can state the main theorem of this section.  Its proof will occupy the remainder of the section.

\begin{theorem}\label{thm:4k-1-conn-8k-manifolds-examples}
Let $m$ be a positive integer and let $\{a, b\}$ be a pair of positive integers such that $\mathfrak{bp}_{m} \mid ab$.
If $d = \gcd(a, b)$ and $\ol j_m = j_m/\gcd(j_m, d)$,
then the closed, $(4m{-}1)$-connected $8m$-manifolds $M_{a, b}$ constructed in Section~\ref{subsection:construction} satisfy the following:
\begin{enumerate}
\item\label{item-thm-4k-1-conn-8k-0} $M_{a,b}$ has hyperbolic intersection form,
\item\label{item-thm-4k-1-conn-8k-i} $|\SC(M_{a, b})/\Theta_{8m}| = 2^{q_{A}}$, and
\item\label{item-thm-4k-1-conn-8k-ii} $2^{q_{A, m}} \leq |\HSC(M_{a, b})| \leq \Big\lfloor \frac{\ol{j}_m^2 +2\ol{j}_m + 4}{4} \Big\rfloor$.
\end{enumerate}
\end{theorem}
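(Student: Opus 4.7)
The plan is to address the three items of Theorem~\ref{thm:4k-1-conn-8k-manifolds-examples} in order. Item~\eqref{item-thm-4k-1-conn-8k-0} is built into the construction of $M_{a,b}$ in Section~\ref{subsection:construction}, whose final step, in exact analogy with the $u,v$ summand in the proof of Proposition~\ref{prop:ManifoldNab}, is framed surgery onto a rank-two hyperbolic plane. For item~\eqref{item-thm-4k-1-conn-8k-i}, I will combine Wall's diffeomorphism classification (Section~\ref{section:almost-smooth-classification}) with the stable classification of the relevant extended symmetric forms (Section~\ref{section:stable-classification}). The stable invariants of the integer-valued extended symmetric form $(H^+(\Z), f\colon \Z^2 \to \Z)$ attached to $M_{a,b}$ are the divisibility and product of $f$; these are $d/c_m$ and $A\cdot (d/c_m)^2$ after accounting for the exceptional factor $c_m$. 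Hence elements of $\SC(M_{a,b})/\Theta_{8m}$ correspond bijectively to unordered coprime factorisations $\{a',b'\}$ of $A$. Since each of the $|\mathcal{P}_A|$ primes dividing $A$ must sit entirely in one factor, there are $2^{|\mathcal{P}_A|}$ ordered and hence $2^{|\mathcal{P}_A|-1} = 2^{q_A}$ unordered such factorisations.

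For item~\eqref{item-thm-4k-1-conn-8k-ii}, the finer homotopy invariant is Wall's $\Z/j_m$-valued extended symmetric form $(H^+(\Z), f\colon \Z^2 \to \Z/j_m)$ from \cite[Lemma 8]{Wall-classification-n-1-conn-2n}, where $f$ encodes the $m$th Pontryagin class reduced modulo the image of $J_{4m-1}$. For the manifold $M_{a',b'}$ the pair representing $f$ is $d \cdot (a',b') \bmod j_m$, so two coprime factorisations yield homotopy equivalent manifolds exactly when $\{a'_1,b'_1\}$ and $\{a'_2,b'_2\}$ agree in $(\Z/\ol{j}_m)^2$ up to the action of $\mathrm{Aut}(H^+(\Z)) = \{\pm\Id,\pm\,\text{swap}\}$. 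The upper bound is then a pure orbit count: Burnside's lemma applied to this order-four group acting on $(\Z/\ol{j}_m)^2$, with fixed-point counts $\ol{j}_m^2$, $\gcd(2,\ol{j}_m)^2$, $\ol{j}_m$ and $\ol{j}_m$ for $\Id$, $-\Id$, swap and $-\text{swap}$ respectively, gives $(\ol{j}_m^2 + \gcd(2,\ol{j}_m)^2 + 2\ol{j}_m)/4$ orbits, which coincides with $\lfloor(\ol{j}_m^2 + 2\ol{j}_m + 4)/4\rfloor$ in both parities of $\ol{j}_m$. For the lower bound, the argument is that redistributing a prime $p \in \mathcal{P}_{A,m}$ between the two factors genuinely changes the image in $(\Z/\ol{j}_m)^2$ and cannot be absorbed by swap or by the simultaneous sign change; counting the resulting unordered free choices for the primes in $\mathcal{P}_{A,m}$ yields $2^{|\mathcal{P}_{A,m}|-1} = 2^{q_{A,m}}$ pairwise distinct $\mathrm{Aut}(H^+(\Z))$-orbits, and hence at least that many homotopy types in $\SC(M_{a,b})$.

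The main obstacle is the lower bound in \eqref{item-thm-4k-1-conn-8k-ii}. The delicate points are, first, to verify that the construction of Section~\ref{subsection:construction} really produces manifolds whose Wall form $f$ represents exactly $d\cdot(a',b') \bmod j_m$, with the correct bookkeeping of the exceptional factor $c_m$ arising in dimensions $8$ and $16$; and second, to show that genuinely distinct redistributions of primes in $\mathcal{P}_{A,m}$ land in distinct $\mathrm{Aut}(H^+(\Z))$-orbits in $(\Z/\ol{j}_m)^2$ rather than being coincidentally identified by the swap or sign involutions, which is the place where the passage from $j_m$ to $\ol{j}_m$ is essential. Both tasks rely on precise control of the unstable $J$-homomorphism and of the diagram~\eqref{eq:e=HJ}, and will occupy the bulk of Section~\ref{subsection:proof-of-thm-31}.
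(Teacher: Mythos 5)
Your overall route coincides with the paper's: item (2) via the stable invariants (divisibility of $S\alpha$, signature, $(S\alpha)^2$) plus Wall's almost-diffeomorphism classification, and item (3) via Wall's homotopy classification, with the upper bound an orbit count of pairs in $\Z/\ol j_m$; your Burnside computation over $\{\pm\Id,\pm\mathrm{swap}\}$ reproduces the paper's bound $\bigl\lfloor(\ol j_m^2+2\ol j_m+4)/4\bigr\rfloor$ exactly. However, two genuine gaps remain. First, your statement that two of the manifolds are homotopy equivalent ``exactly when'' their pairs agree up to $\mathrm{Aut}(H^+(\Z))=\{\pm\Id,\pm\mathrm{swap}\}$ only covers degree-one homotopy equivalences (and, in item (2), orientation-preserving almost diffeomorphisms). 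An orientation-reversing equivalence corresponds to an isometry from $H^+(\Z)$ to $-H^+(\Z)$, and since reversing the orientation negates $\lambda$ but does not change $S\alpha$ (hence not $S\alpha^h$), it identifies the pair $\{v,w\}$ with $\{-v,w\}$ or $\{v,-w\}$: a sign change on a \emph{single} entry, which is not in your order-four group. Both lower bounds require this case to be excluded; the paper does so for (2) by using that both entries $dy_i,dz_i$ are positive integers, and for (3) by an argument that is insensitive to arbitrary sign changes.

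Second, the heart of the lower bound in (3) --- that redistributing primes of $\mathcal{P}_{A,m}$ ``genuinely changes the image in $(\Z/\ol j_m)^2$ and cannot be absorbed'' --- is asserted rather than proved, and it is not automatic: the entries are only defined modulo $\ol j_m$, so distinct integer factorisations could a priori be identified after reduction, swapping, or sign flips. The paper's missing ingredient here is concrete: choose primes $p\neq q$ dividing $A'$ (hence dividing $\ol j_m$), normalise so that $p$ divides the first entry for both $M_i$ and $M_j$, choose $q$ distributed differently between the two factorisations, and compare the vanishing patterns of the four entries under the reductions $\rho_p,\rho_q$; coprimality of $\ol d,\ol d'$ with $\ol j_m$ makes these patterns meaningful, and they are invariant under swap and under changing the sign of either entry, so they distinguish $M_i$ from $M_j$ for either orientation. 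Relatedly, to keep all comparison manifolds in a single stable class the product $2abc_m^2$ must be held fixed, so the prime powers of $A$ outside $\mathcal{P}_{A,m}$ must be parked in a fixed factor (the paper's $d'$ and $A'$); your sketch does not address this bookkeeping, and your identification of the stable invariants in item (2) has the $c_m$ on the wrong side (the divisibility of $S\alpha_{M_{a,b}}$ is $\gcd(a,b)\,c_m$, not $\gcd(a,b)/c_m$), though this does not affect the count $2^{q_A}$.
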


Adam's work on $j_m$ \cite{Adams:1966-1},
a theorem of von Staudt and Clausen (see \cite[Theorem 3,~p.\ 233]{Ireland-Rosen}) on the
denominator of $B_m$, and a result of von Staudt on the numerator of $B_m$ (see
 \cite[Lemma 2]{Milnor1958}) combine to show that
 \[\mathcal{P}_{j_m} = \{ p \text{ prime} : (p-1) \mid 2m\}.\]
Since $2$ and $3$ certainly lie in the latter set, $|\mathcal{P}_{j_m}|  \geq 2$.
Now define \[q_m := |\mathcal{P}_{j_m}| - 1 \geq 1.\]
By choosing $a$ and $b$ with some care, we obtain the following corollary, which implies Theorem~\ref{thm:3-conn-8-manifolds-examples}.

\begin{corollary} \label{cor:4k-1-conn-8m-manifolds-examples}
Let $m$ be a positive integer
and let $\{a, b\}$ be a pair of positive, coprime integers such that $\mathfrak{bp}_{m} \mid ab$
and $j_m/c_m \mid A = abc_m^2$.
Then the closed, $(4m{-}1)$-connected $8m$-manifolds $M_{a, b}$ constructed in Section~\ref{subsection:construction} have hyperbolic intersection form and satisfy that $2 \leq 2^{q_m} \leq |\HSC(M_{a, b})|$.
\end{corollary}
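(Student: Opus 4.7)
The plan is to derive the corollary as an immediate consequence of Theorem~\ref{thm:4k-1-conn-8k-manifolds-examples}: the coprimality and divisibility hypotheses on $\{a,b\}$ are designed precisely to force $q_m \leq q_{A,m}$, so that the lower bound from part~\eqref{item-thm-4k-1-conn-8k-ii} of the theorem upgrades to a cleaner bound in terms of $q_m$. The hyperbolic intersection form is immediate from part~\eqref{item-thm-4k-1-conn-8k-0}, and the existence of $M_{a,b}$ is guaranteed by $\mathfrak{bp}_m \mid ab$, which matches the theorem's hypothesis. So the substantive task reduces to showing that $\mathcal{P}_{j_m} \subseteq \mathcal{P}_{A,m} = \mathcal{P}_A \cap \mathcal{P}_{\overline{j}_m}$.

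First I would unpack the definitions under the coprimality hypothesis $\gcd(a,b)=1$: one has $d = \gcd(a,b) c_m = c_m$, so $A = abc_m^2/d^2 = ab$, and $\overline{j}_m = j_m/\gcd(j_m, c_m)$. For $m > 2$ the factor $c_m = 1$ gives $\overline{j}_m = j_m$ and $j_m/c_m = j_m$, and all three prime sets $\mathcal{P}_{j_m}$, $\mathcal{P}_{j_m/c_m}$, $\mathcal{P}_{\overline{j}_m}$ coincide trivially. For the exceptional dimensions $m = 1, 2$, where $c_m = 2$, one uses the explicit values $j_1 = 24 = 2^3\cdot 3$ and $j_2 = 240 = 2^4\cdot 3\cdot 5$ to verify that halving $j_m$ does not delete the prime $2$ from the factorisation, so again $\mathcal{P}_{j_m/c_m} = \mathcal{P}_{\overline{j}_m} = \mathcal{P}_{j_m}$.

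The divisibility hypothesis $j_m/c_m \mid A$ then gives $\mathcal{P}_{j_m} = \mathcal{P}_{j_m/c_m} \subseteq \mathcal{P}_A$, and combining with the equality $\mathcal{P}_{\overline{j}_m} = \mathcal{P}_{j_m}$ yields $\mathcal{P}_{j_m} \subseteq \mathcal{P}_A \cap \mathcal{P}_{\overline{j}_m} = \mathcal{P}_{A,m}$. Hence $q_m \leq q_{A,m}$, so applying part~\eqref{item-thm-4k-1-conn-8k-ii} of Theorem~\ref{thm:4k-1-conn-8k-manifolds-examples} gives $2^{q_m} \leq 2^{q_{A,m}} \leq |\HSC(M_{a,b})|$. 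The final lower bound $2 \leq 2^{q_m}$ follows from the observation preceding the corollary that $q_m \geq 1$, since both $2$ and $3$ lie in $\mathcal{P}_{j_m}$ for every $m$.

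There is no serious obstacle here; the argument is essentially a bookkeeping exercise in prime factorisations. The one point requiring care is the exceptional factor $c_m = 2$ in low dimensions $m = 1, 2$: a priori one might worry that dividing $j_m$ by $c_m$ could drop primes from the factorisation, but the explicit computation above confirms that this does not happen, because $2$ appears with multiplicity at least two in both $j_1$ and $j_2$.
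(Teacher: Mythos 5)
Your proposal is correct and follows essentially the same route as the paper's own proof: coprimality gives $d=c_m$, $A=ab$ and $\ol j_m = j_m/c_m$ with $\mathcal{P}_{\ol j_m}=\mathcal{P}_{j_m}$ (the paper invokes $4\mid j_m$ for $m=1,2$ exactly where you check $j_1=24$, $j_2=240$), and the divisibility hypothesis then forces $\mathcal{P}_{j_m}\subseteq \mathcal{P}_{A,m}$, so Theorem~\ref{thm:4k-1-conn-8k-manifolds-examples}~\eqref{item-thm-4k-1-conn-8k-ii} together with $q_m\geq 1$ gives the bound. The only cosmetic difference is that the paper records the equality $q_{A,m}=q_m$ while you settle for the inequality $q_m\leq q_{A,m}$, which suffices.
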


In particular, any coprime, positive $a,b$ such that $\mathfrak{bp}_m \cdot j_m/c_m$ divides $A=abc_m^2$ satisfies the hypotheses of the corollary. Note that changing $A$ does not alter the lower bound, which is purely a function of $m$.

\begin{proof}
Since $a$ and $b$ are coprime, $d = c_m$, $\ol j_m = j_m/c_m$ and $\mathcal{P}_{\ol j_m} = \mathcal{P}_{j_m}$ (using $4\mid j_m$ for $m=1,2$).
Since $j_m /c_m = \ol j_m \mid A$ we see that $\mathcal{P}_{\ol j_m} \subseteq \mathcal{P}_{\! A}$ and therefore
$\mathcal{P}_{\! A, m} = \mathcal{P}_{\ol j_m} = \mathcal{P}_{j_m}$, so that $q_{A,m} = q_m$.
Since $q_m \geq 1$, the corollary follows from the lower bound in
Theorem~\ref{thm:4k-1-conn-8k-manifolds-examples}~\eqref{item-thm-4k-1-conn-8k-ii}.
\end{proof}

\subsection{The almost-diffeomorphism classification of $(4m{-}1)$-connected $8m$-manifolds}
\label{section:almost-smooth-classification}

In this section we recall the relevant part of Wall's classification results for closed, $(4m{-}1)$-connected $8m$-manifolds.
Recall that two closed manifolds $M_0$ and $M_1$ are \emph{almost diffeomorphic} if there
is a homotopy sphere $\Sigma$ and a diffeomorphism $f \colon M_0 \# \Sigma \to M_1$.

Let $M$ be a closed,  $(4m{-}1)$-connected $8m$-manifold, and equip $M$ with an orientation.
The {\em intersection form of $M$} is a symmetric bilinear form
\[ \lambda_M \colon H_{4m}(M) \times H_{4m}(M) \to \Z. \]
The {\em obstruction class} of $M$ is the homomorphism
\[ S\alpha_M \colon H_{4m}(M) \to \pi_{4m-1}(SO) \cong \Z \]
defined by representing a homology class $x$ by a smoothly embedded sphere $S^{4m}_x \hookrightarrow M$,
whose existence is ensured by Hurewicz theorem and \cite[Theorem~1(a)]{Haefliger}, and then taking the homotopy
class of the clutching map of the stable normal bundle of $S^{4m}_x$.
The map $S\alpha_M$ is the stabilisation of a map $\alpha_M$ defined by taking the
normal bundle of $S^{4m}_x$.
This will be important in the proof of Theorem~\ref{thm:class} below.
As shown by Wall \cite[p.\ 171~\&~Lemma~2]{Wall-classification-n-1-conn-2n}, if $m = 1, 2$ then the existence of rank $4m$ vector bundles over $S^{4m}$ with odd Euler class implies that the obstruction
class is characteristic for the intersection form; i.e.\ if $m=1$ or $2$ then for all $x \in H_{4m}(M)$
\begin{equation} \label{eq:ob_is_char}\tag{$\dag$}
\lambda_M(x, x) \equiv S\alpha_M(x)~\mathrm{mod}~2.
\end{equation}
For $m>2$, by Wall \cite[p.\ 171]{Wall-classification-n-1-conn-2n},
there is no relation between $S\alpha_M$ and $\lambda_M$. As also shown in \cite[p.\ 171~\&~Lemma~2]{Wall-classification-n-1-conn-2n}, since $\e = H \circ J_{4m-1,4m}$ and since for $m >2$ we have that $H \circ J_{4m-1,4m}$ is even, the Euler number is always even and therefore $\lambda_M(x,x) \equiv 0 \mod{2}$ for all $x \in H_{4m}(M)$.

For the homotopy classification, we consider the stable $J$-homomorphism
\[ J_{4m-1} \colon \pi_{4m-1}(SO) \to \Z/j_m \subseteq \pi^s_{4m-1}. \]
The {\em homotopy obstruction class} of $M$, $S\ah_M$, is the composition of $S\alpha_M$ with $J_{4m-1}$,
\[ S\ah_M := J_{4m-1} \circ S\alpha_M \colon H_{4m}(M) \to \Z/j_m. \]
Since $j_1$ and $j_2$ are divisible by $2$ the congruence of \eqref{eq:ob_is_char} implies that
if $m = 1, 2$ then
\begin{equation} \label{eq:hob_is_char}
\lambda_M(x, x) \equiv S\ah_M(x)~\mathrm{mod}~2.
\end{equation}

We now define the invariants we use to classify $(4m{-}1)$-connected $8m$-manifolds
up to almost diffeomorphism and homotopy equivalence.

\begin{definition}[Extended symmetric form]\label{defn:extended-symm-form}
Fix a homomorphism $v \colon G \to \Z/2$ from an abelian group $G$ to $\Z/2$.
An \emph{extended symmetric form over $v$} consists of a triple $(H,\lambda,p)$
where:
\begin{enumerate}[(1)]
  \item $H$ is a finitely generated free $\Z$-module;
  \item $\lambda \colon H \times H \to \Z$ is a symmetric, bilinear form; and
  \item\label{item:extended-symm-form-defn-3} $f \colon H \to G$ is a homomorphism such that $\lambda(x, x) \equiv v \circ f(x)$ mod~$2$.
    \end{enumerate}
Two extended symmetric forms $(H,\lambda, f)$ and $(H',\lambda',f')$ are equivalent if there is an isometry
$h \colon (H,\lambda) \to (H',\lambda')$ such that $f' \circ h = f \colon H \to G$.
\end{definition}

In our applications to $8m$-manifolds, the group $G$ will either be the infinite cyclic group $\pi_{4m-1}(SO) \cong \Z$ or the finite
cyclic group $\mathrm{Im}(J_{4m-1}) \cong \Z/j_m$.
Due to the existence of rank $4m$ bundles over $S^{4m}$ with odd Euler number when $m=1,2$, and the non-existence of such bundles for $ m\geq 3$, we set $v$ to be nonzero for $m = 1, 2$ (recall $2 \mid j_1$ and $2\mid j_2$) and zero for
$m > 2$.
Hence for $m>2$,~\eqref{item:extended-symm-form-defn-3} is just the requirement that $\lambda_M$ be even.  With these conventions on $v$, the following assignments define extended symmetric forms.

\begin{definition}[The extended symmetric forms of $M$]\label{defn:ext-symm-form-of-M}
Let $M$ be an oriented $(4m{-}1)$-connected $8m$-manifold.
\begin{enumerate}
  \item
The {\em smooth extended symmetric form} of $M$ is the triple
\[\bigl( H_{4m}(M),\lambda_M, S\alpha_M \bigr)
\]
with $G \cong \Z$.
\item
The {\em homotopy extended symmetric form} of $M$ is the triple
\[\bigl(H_{4m}(M),\lambda_M, S\ah_M \bigr)\]
with $G \cong \Z/j_m$.
\end{enumerate}
\end{definition}

The following result is a direct consequence of classification results of
Wall \cite[p.~170 \& Lemma~8]{Wall-classification-n-1-conn-2n}.

\begin{theorem}[Wall] \label{thm:class}
Let $M_1$ and $M_2$ be closed, oriented, $(4m{-}1)$-connected $8m$-manifolds.  The manifolds $M_1$ and $M_2$ are:
\begin{enumerate}
  \item\label{item:thm-class-1}  almost diffeomorphic, via an orientation-preserving diffeomorphism, if and only if their smooth extended symmetric forms are equivalent;
\item\label{item:thm-class-2}  homotopy equivalent, via a degree one homotopy equivalence, if and only if their homotopy extended symmetric forms are equivalent.
\end{enumerate}
\end{theorem}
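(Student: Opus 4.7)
The plan is to reduce both statements to Wall's classification \cite[p.~170 \& Lemma~8]{Wall-classification-n-1-conn-2n} of $(4m{-}1)$-connected $8m$-manifolds, which is phrased in terms of the \emph{unstable} invariant $\alpha_M \colon H_{4m}(M) \to \pi_{4m-1}(SO_{4m})$ assigning to an embedded sphere the clutching function of its normal bundle (for almost diffeomorphism), and in terms of $\beta_M := J_{4m-1,4m} \circ \alpha_M \colon H_{4m}(M) \to \pi_{8m-1}(S^{4m})$ (for homotopy equivalence). The forward implications of both parts are then immediate: $S\alpha_M$ and $S\ah_M$ are obtained from $\alpha_M$ and $\beta_M$ by postcomposition with stabilisation maps, and Wall's invariants are preserved by almost diffeomorphism and homotopy equivalence respectively.

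For the converse implications, the task is to promote an equivalence of stable extended symmetric forms to an equivalence of Wall's unstable triples $(H_{4m}(M),\lambda_M,\alpha_M)$ or $(H_{4m}(M),\lambda_M,\beta_M)$. The key input is diagram \eqref{eq:e=HJ}: injectivity of $\e \oplus S$ recovers $\alpha_M$ from the pair $(\e\circ\alpha_M,\, S\alpha_M)$, and injectivity of $H \oplus S$ recovers $\beta_M$ from $(H\circ\beta_M,\, S\ah_M)$. The point is that $\e\circ\alpha_M(x)$ is the Euler number of the normal bundle of a sphere representing $x$, which equals the self-intersection $\lambda_M(x,x)$; commutativity of \eqref{eq:e=HJ} then forces $H\circ\beta_M(x) = \lambda_M(x,x)$ as well. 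Hence an isomorphism of smooth (respectively homotopy) extended symmetric forms transports $\alpha_M$ (respectively $\beta_M$) between $M_1$ and $M_2$, and Wall's theorem delivers the desired almost diffeomorphism (respectively homotopy equivalence).

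The main obstacle is the $\Z/2$ cokernel in both rows of \eqref{eq:e=HJ}, which reflects the failure of $\e\oplus S$ and $H\oplus S$ to be surjective: one must verify that the pair $(\lambda_M(-,-),\, S\alpha_M)$ (respectively $(\lambda_M(-,-),\, S\ah_M)$) actually lies in the image of $\e\oplus S$ (respectively $H\oplus S$), so that a lift to $\alpha_M$ (respectively $\beta_M$) exists. This is exactly the parity condition \eqref{item:extended-symm-form-defn-3} in Definition~\ref{defn:extended-symm-form}: for $m = 1, 2$ the homomorphism $v$ is chosen to be the nonzero mod-$2$ reduction, so the condition becomes the congruence \eqref{eq:ob_is_char} (respectively~\eqref{eq:hob_is_char}), which holds because $S^{4m}$ carries rank $4m$ bundles with odd Euler number. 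For $m > 2$, we take $v = 0$ and the constraint reduces to evenness of $\lambda_M$, which is automatic since $\e = H \circ J_{4m-1,4m}$ then takes only even values.
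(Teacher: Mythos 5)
Your proof is correct and takes essentially the same route as the paper: reduce to Wall's classification in terms of the unstable invariants $\alpha_M$ and $J_{4m-1,4m}\circ\alpha_M$, and use the injectivity of $\e\oplus S$ and $H\oplus S$ from \eqref{eq:e=HJ} together with $\e(\alpha_M(x))=\lambda_M(x,x)$ to recover these from the stable extended symmetric forms. The only remark is that the lifting/surjectivity concern in your final paragraph is not actually needed: the pairs $(\lambda_{M_i}(x,x),S\alpha_{M_i}(x))$ lie in the image of $\e\oplus S$ by construction, since they come from $\alpha_{M_i}(x)$ for genuine manifolds, so injectivity alone suffices (the parity condition in Definition~\ref{defn:extended-symm-form} is there to make the forms well-defined objects, not an input to this comparison).
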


When applying these classifications, we will later have to factor out by the effect of the orientation choice on the extended symmetric forms.

\begin{proof}
We start with the almost diffeomorphism classification~\eqref{item:thm-class-1}.
As mentioned above, the homomorphism $S\alpha_M$ is the stabilisation
of a certain quadratic form, the extended quadratic form of $M$,
which is the map
\[ \alpha_M \colon H_{4m}(M) \to \pi_{4m-1}(SO_{4m}), \]
defined by representing a homology class by a smoothly embedded sphere $S^{4m} \hookrightarrow M$,
and then taking the classifying map in $\pi_{4m-1}(SO_{4m})$ of the normal bundle of the embedded sphere.
For all $x, y \in H_{4m}(M)$, \cite[Lemma 2]{Wall-classification-n-1-conn-2n} (and the fact that $\e = H \circ J_{4m-1,4m}$) proves that $\alpha_M$ relates to the intersection form of $M$ by the equations
\[ \lambda_M(x, x) = \e(\alpha_M(x))
\quad \text{and} \quad
\alpha_M(x + y) = \alpha_M(x) + \alpha_M(y) + \lambda(x, y) \tau.\]
Here the map $\e \colon \pi_{4m-1}(SO_{4m}) \to \Z$ is the Euler number of the corresponding bundle and $\tau \in \pi_{4m-1}(SO_{4m})$
is the clutching function of the tangent bundle of $S^{4m}$.
Wall also proved~\cite[p.\ 170]{Wall-classification-n-1-conn-2n} that
the triple $(H_{4m}(M),\lambda_M,\alpha_M)$
is a complete almost diffeomorphism invariant of~$M$.
In fact, Wall stated his classification in terms of \emph{almost closed} manifolds: compact manifolds with boundary a homotopy sphere. But this also yields the almost diffeomorphism classification, as follows. If the extended symmetric forms of two closed $(4m{-}1)$-connected $8m$-manifolds are equivalent then by the almost closed classification the manifolds are diffeomorphic after removing a ball $D^{8m}$ from each. Gluing the balls back in compatibly with the diffeomorphism might change one of the manifolds by connected sum with a homotopy sphere, but nonetheless the two closed manifolds are almost diffeomorphic.  On the other hand almost diffeomorphic manifolds are diffeomorphic after removing a ball from each, and then by the classification the extended symmetric forms are equivalent.

As mentioned above, $S\alpha_M := S \circ \alpha_M$, where $S \colon \pi_{4m-1}(SO_{4m}) \to \pi_{4m-1}(SO)$ is
the stabilisation homomorphism.  The homotopy exact sequence of the
fibration $SO_{4m} \to SO_{4m+1} \to S^{4m}$ shows that the kernel of $S$ is generated by $\tau$~\cite[Lemma~1.3~and~Theorem~1.4]{LevineLectures} and since
\[ \e \oplus S \colon \pi_{4m-1}(SO_{4m}) \to \Z \oplus \pi_{4m-1}(SO) \]
is injective by \eqref{eq:e=HJ}
it follows that the pair $(\lambda_M(x, x), S\alpha_M(x)) = \bigl( \e(\alpha_M(x)), S(\alpha_M(x)) \bigr) \in \Z \oplus \pi_{4m-1}(SO) \cong \Z \oplus \Z$
determines $\alpha_M(x)$ for all $x \in H_{4m}(M)$.
The theorem now follows from Wall's almost diffeomorphism classification.

The proof of the homotopy classification is similar.  By Wall~\cite[Lemma 8]{Wall-classification-n-1-conn-2n},
the triple $(H_{4m}(M), \lambda_M, \alpha_M^h:= J_{4m-1, 4m} \circ \alpha_M)$
is a complete homotopy invariant of the manifolds under consideration.
Since $\e = HJ$ and
\[ H \oplus S \colon \pi_{8m-1}(S^{4m}) \to \Z \oplus \pi^s_{4m-1} \]
is injective by \eqref{eq:e=HJ}, it follows that the pair $(\lambda_M(x, x), S\ah_M(x)) = \bigl( H(\ah_M(x)), S(\ah_M(x)) \bigr) \in \Z \oplus \pi^s_{4m-1}$
determines $\ah_M(x)$ for all $x \in H_{4m}(M)$.
The theorem now follows from Wall's homotopy classification.
\end{proof}

\subsection{Stable almost-diffeomorphism classification of $(4m{-}1)$-connected $8m$-manifolds}\label{ss:stable-classification}
In this section we give the stable classification of closed $(4m{-}1)$-connected $8m$-manifolds up
to connected sum with homotopy $8m$-spheres.
Define the non-negative integer $d_M$ by the equation
\[ S\alpha_M(H_{4m}(M)) = d_M \Z.\]
Equivalently, $d_M$ is the divisibility of $S\alpha_M \in H^{4m}(M)$, where, since $M$ is $(4m{-}1)$-connected, we may regard $S\alpha_M$ as an element of the group $H^{4m}(M)$ via the inverse of the evaluation map $\operatorname{ev} \colon H^{4m}(M) \to \Hom(H_{4m}(M),\Z)$, which is an isomorphism.
In particular, it makes sense to consider the class $(S\alpha_M)^2 \in H^{8m}(M) \cong \Z$.

\begin{theorem}\label{thm:stable-diffeo-classification-8-manifolds}
Two closed, oriented, $(4m{-}1)$-connected $8m$-manifolds $M$ and $N$ with the same Euler characteristic are
almost stably diffeomorphic, via an orientation-preserving diffeomorphism, if and only if the following hold:
\begin{enumerate}
\item $d_M = d_N$,
\item $\sigma(M) = \sigma(N)$,
\item $\langle (S\alpha_M)^2, [M] \rangle = \langle (S\alpha_N)^2,[N]\rangle$.
\end{enumerate}
\end{theorem}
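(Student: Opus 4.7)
The plan is to prove necessity and sufficiency separately, reducing the latter to Wall's almost-diffeomorphism classification Theorem~\ref{thm:class}~\eqref{item:thm-class-1} via a stable classification of smooth extended symmetric forms.

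For necessity, I will verify that each of $d_M$, $\sigma(M)$ and $\langle (S\alpha_M)^2, [M]\rangle$ is preserved under the two relevant operations. Connected sum with a homotopy sphere $\Sigma \in \Theta_{8m}$ leaves all three invariants untouched, since $H_{4m}(\Sigma) = 0$ and $\sigma(\Sigma) = 0$. For connected sum with $S^{4m}\times S^{4m}$, the two natural sphere classes have trivial normal bundle in the product (the tangent bundle of $S^{4m}\times S^{4m}$ splits as a direct sum of pullbacks from the two factors), so $\alpha$ and hence $S\alpha$ extend by zero on the new hyperbolic summand of $H_{4m}$; meanwhile $\sigma(S^{4m}\times S^{4m}) = 0$. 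Hence the gcd $d_M$ and the pairing $\langle (S\alpha_M)^2, [M]\rangle$ are unchanged.

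For sufficiency, it is enough to show that, given matching invariants and equal Euler characteristic, the smooth extended symmetric forms of $M \# W_g$ and $N \# W_g$ become equivalent for all sufficiently large $g$; Theorem~\ref{thm:class}~\eqref{item:thm-class-1} then produces an orientation-preserving almost diffeomorphism, which is exactly almost stable diffeomorphism. Using Poincar\'e duality to identify $S\alpha_M$ with a unique class $u_M \in H_{4m}(M)$ via $\lambda_M(u_M, -) = S\alpha_M(-)$, the three invariants translate to the signature of $\lambda_M$, the divisibility $\mathrm{div}(u_M) = d_M$, and the self-intersection $\lambda_M(u_M, u_M) = \langle(S\alpha_M)^2, [M]\rangle$. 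Stabilisation by $S^{4m}\times S^{4m}$ replaces the triple $(H_{4m}(M), \lambda_M, u_M)$ by $(H_{4m}(M) \oplus H^+(\Z), \lambda_M \perp \lambda_{H^+}, u_M \oplus 0)$.

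The heart of the argument is then a purely algebraic stable classification for such triples $(L, \lambda, u)$. After enough stabilisations the ambient form is indefinite of large rank, and its parity automatically matches that of its counterpart: for $m \geq 3$ both forms are even by the discussion preceding \eqref{eq:ob_is_char}, and for $m = 1, 2$ characteristicness of $u$ with respect to $\lambda$ is imposed by the defining congruence of Definition~\ref{defn:extended-symm-form}~\eqref{item:extended-symm-form-defn-3}, so parity is controlled by $\lambda(u, u) \bmod 2$. The classical classification of indefinite unimodular $\Z$-lattices then yields an isometry between the two stabilised forms. Composing with Wall's transitivity theorem for the orthogonal group of an indefinite unimodular $\Z$-lattice of rank $\geq 3$ acting on vectors of prescribed divisibility and self-intersection (respecting the characteristic/non-characteristic distinction in the odd case) produces an isometry carrying the image of $u_M \oplus 0$ to $u_N \oplus 0$, giving the desired equivalence of extended symmetric forms.

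The main obstacle will be this last algebraic step: one must verify that the pair $(\mathrm{div}(u), \lambda(u,u))$ really does classify the stable orbit of $u$ in an indefinite unimodular $\Z$-lattice under the parity/characteristic constraint from Definition~\ref{defn:extended-symm-form}~\eqref{item:extended-symm-form-defn-3}, and must reconcile the characteristic status of $u_M \oplus 0$ before and after stabilisation, especially in the exceptional dimensions $m = 1, 2$ where the extended symmetric form and the orbit problem are genuinely intertwined.
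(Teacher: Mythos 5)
Your proposal is essentially correct but takes a genuinely different route from the paper. The paper proves the nontrivial direction with Kreck's modified surgery: it identifies the normal $(4m{-}1)$-type of $M$ and $N$ as the fibration $BO\an{4m{-}1,d}\to BO$ determined by $d=d_M=d_N$, checks that the normal smoothings are unique, and then uses the exact sequence $\Theta_{8m}\to\Omega_{8m}(B_d,\eta_d)\xrightarrow{(\sigma,(S\alpha)^2)}\Z^2$ coming from Wall's computation of the Grothendieck group of almost closed $(4m{-}1)$-connected $8m$-manifolds, so that matching $\sigma$ and $(S\alpha)^2$ forces the bordism classes to agree up to a homotopy $8m$-sphere. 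You instead stabilise Wall's smooth extended symmetric form and feed $M\# W_g$, $N\# W_g$ into Theorem~\ref{thm:class}~(1); this bypasses bordism groups and Kreck's theorem entirely, at the price of the lattice-theoretic input you flag. That input is genuinely available, so your route does close: writing $u=du'$ with $u'$ primitive, in the even case (all $m\geq 3$, and $m=1,2$ with $d$ even) transitivity of the isometry group on primitive vectors of fixed norm follows from the Eichler-type criterion once two hyperbolic planes split off, while for $m=1,2$ with $d$ odd the form is odd and $u'$ is characteristic, where Wall's results on orthogonal groups of indefinite unimodular forms give transitivity on characteristic vectors of fixed norm; non-characteristic primitive vectors never arise in your situation, so the delicate odd-case orbit bookkeeping largely evaporates. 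The trade-off is that the paper's argument localises all the hard algebra in a single citation of Wall's exact sequence, whereas yours requires carefully stating and citing (or reproving, stably) the orbit theorem.

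Two small corrections to your plan. First, the parity of the (stabilised) form in the exceptional dimensions is not controlled by $\lambda(u,u)\bmod 2$: since $u$ is characteristic, the form is even precisely when the divisibility $d$ is even, so it is the hypothesis $d_M=d_N$ that guarantees the two forms have the same parity for $m=1,2$ (for $m\geq 3$ both are automatically even). Second, when reducing divisibility-$d$ vectors to primitive ones, note that for $d$ odd characteristicness of $u$ is equivalent to characteristicness of $u'$, while for $d$ even it is equivalent to evenness of the form and imposes no condition on $u'$; with this dictionary, and with equal Euler characteristics supplying equal rank so the ambient indefinite forms agree by rank, signature and parity, your stabilised extended symmetric forms are indeed equivalent and Theorem~\ref{thm:class}~(1) finishes the argument.
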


\begin{proof}
First, we note that $d_M$, the signature, and $(S\alpha_M)^2$ are invariants
of orientation preserving almost stable diffeomorphisms, so one implication holds.

For the other implication we assume that $M$ and $N$ are such that
$d_M = d_{N}$, $\sigma(M) = \sigma(N)$, and $(S\alpha_M)^2 = (S\alpha_N)^2$
and we show that $M$ and $N$ are stably diffeomorphic.
The normal $(4m{-}1)$-type of $M$ and $N$ is determined by $d = d_M = d_N$
and is described as follows.
Let $d$ be a non-negative integer.
Let $BO\an{4m{-}1} \to BO$ be the $(4m{-}1)$-connected cover of $BO$ and let
$p \in H^{4m}(BO\an{4m{-}1}) \cong \Z$ be a generator.  We regard $\rho_d(p)$, the mod~$d$
reduction of $p$, as a map $\rho_d(p) \colon BO\an{4m{-}1} \to K(\Z/d, 4m)$ and define $BO\an{4m{-}1, d_M}$
to be the homotopy fibre of $\rho_d(p)$.  The normal $(4m{-}1)$-type of $M$ and $N$ is represented by the fibration given by the composition
\[  BO\an{4m{-}1, d} \to BO\an{4m{-}1} \to BO.\]
For brevity, use $(B_d, \eta_d)$ to denote the fibration $\eta_d \colon BO\an{4m{-}1, d} \to BO$.
We assert that~$M$ and $N$ admit unique normal $(4m{-}1)$-smoothings $\ol \nu_M \colon M \to B_d$
and $\ol \nu_N \colon N \to B_d$.
We prove the assertion for $M$, as the proof for $N$ is identical.
Since $M$ is $4m$-connected, its stable normal bundle $\nu_M \colon M \to BO$ lifts (up to homotopy) uniquely  to $\nu_{4m} \colon M \to BO\langle 4m \rangle$.
In order to lift $\nu_{4m}$ to~$B_d$, we consider the long exact sequence (of pointed sets) of the fibration
\[  0=H^{4m-1}(M;\Z/d) \to [M,B_d] \to [M,BO\langle 4m\rangle] \to H^{4m}(M;\Z/d)  \to \cdots, \]
where on the left, we used $[M,\Omega K(\Z/d,4m)]=[M, K(\Z/d,4m-1)]=H^{4m-1}(M;\Z/d)=0$, because~$M$ is $4m$-connected.
The assertion is now proved by noting that $\nu_{4m} \in [M,BO\langle 4m \rangle]$ maps to $S\alpha_M \in H^{4m}(M;\Z/d)$, which is zero by definition of the divisibility $d_M$.

By \cite[Theorem 2]{KreckSurgeryAndDuality}, $M$ and $N$ are
orientation preserving stably diffeomorphic if
\[ [M, \ol \nu_M] = [N, \ol \nu_N] \in \Omega_{8m}(B_d, \eta_d).\]
Since homotopy $8m$-spheres have a unique $(B_d, \eta_d)$-structure, there
is a well-defined homomorphism $\Theta_{8m} \to \Omega_{8m}(B_d, \eta_d)$.
Now the arguments in Wall's computation of the Grothendieck groups of almost closed $(4m{-}1)$-connected $8m$-manifolds \cite[Theorem 2]{Wall-classification-n-1-conn-2n} show that there
is an exact sequence
\begin{equation} \label{eq:Omega}\tag{$\Omega$}
\Theta_{8m} \to \Omega_{8m}(B_d, \eta_d) \xra{(\sigma,\, (S\alpha)^2)} \Z^2,
\end{equation}
where $\sigma([M, \ol \nu_M]) = \sigma_M$ and $S\alpha^2([M, \ol \nu_M]) = (S\alpha_M)^2([M])$.
It follows that there is a homotopy $8m$-sphere $\Sigma$ such that
$[M \# \Sigma, \ol \nu_{M \# \Sigma}] = [N, \ol \nu_N] \in \Omega_{8m}(B_d, \eta)$.
Hence $M \# \Sigma$ and $N$ are stably diffeomorphic and so
$M$ and $N$ are almost stably diffeomorphic.
\end{proof}

\subsection{Construction of the manifolds $M_{a, b}$}\label{subsection:construction}

In this section we construct the manifolds~$M_{a,b}$ appearing in Theorem~\ref{thm:4k-1-conn-8k-manifolds-examples}.
Let $a$ and $b$ be positive integers such that $\mathfrak{bp}_m \mid ab$.
We will build simply-connected, closed $8m$-manifolds $M_{a,b}$ with the cohomology ring of $S^{4m} \times S^{4m}$ by attaching handles to an $8m$-ball.
We attach two $4m$-handles $h_x$ and $h_y$, diffeomorphic to $D^{4m}\times D^{4m}$, to $D^{8m}$ using attaching maps $\phi_x,\phi_y \colon S^{4m-1} \times \{0\} \to S^{8m-1}$ with linking number~$1$. Note that for $m \geq 1$, 2-component links $S^{4m-1} \sqcup S^{4m-1} \hookrightarrow S^{8m-1}$ are classified up to smooth isotopy by the linking number, an integer~\cite[Theorem~in~Section 5]{HaefligerDifferentiableLinks}.
There is more data needed for the attaching maps, which for each $4m$-handle corresponds to a choice of framing for the attaching sphere $S^{4m{-}1} \subseteq S^{8m-1}$.
The framings that induce a given orientation are in one to one correspondence with homotopy classes of maps $[S^{4m{-}1},SO_{4m}]$ where the class of the constant map corresponds to the framing
which extends over an embedded $4m$-disc $D^{4m} \subseteq S^{8m-1}$.
Recall from \eqref{eq:e=HJ} that $\pi_{4m{-}1}(SO_{4m}) \cong \Z \oplus \Z$, detected by $\e \oplus S$ (although this map is not an isomorphism).
We are attaching $4m$-handles~$h_x$ $h_y$; let~$x$ and~$y$ denote the corresponding classes in $(4m)$th homology
and let $\xi_x,\xi_y  \in \pi_{4m-1}(SO_{4m})$ be the framings for the attaching maps.

Since we want $\lambda(x,x)=0$, we require that $\e(\xi_x) = 0$ but we are otherwise free to choose~$\xi_x$.  Recall that $c_m = 2$ if $m = 1, 2$ and $c_m = 1$ if $m \!> \! 2$,
%
%
fix an isomorphism $\pi_{4m-1}(SO) = \Z$ and choose $\xi_x$ such that $S(\xi_x) = a c_m$.
By the discussion following \eqref{eq:e=HJ}, we can find such a~$\xi_x$ for any choice of $a$.
Similarly, we attach the handle $h_y$ with $\e(\xi_y) = 0$ and $S(\xi_y) = b c_m$. Again, we can find such a $\xi_y$ for any $b$.
After attaching the pair of $4m$-handles, we write $W := W_{a, b}$ for the resulting compact
$8m$-manifold with boundary.
Note that there is a homotopy equivalence $W \simeq S^{4m} \vee S^{4m}$.
As above let $x$ and $y$ be generators of $\Z^2 \cong H_{4m}(W)$ and let $\{x^*, y^*\}$ be the dual basis for
$\Z^2 \cong H^{4m}(W) = H_{4m}(W)^*$.
The manifold $W=W_{a, b}$ has smooth extended symmetric form given by
\[ \bigl( H_{4m}(W), \lambda_W, S\alpha_W \bigr) = \left( \Z^2, \twotwo{0}{1}{1}{0}, \cvec{a c_m}{b c_m} \colon \Z^2 \to \Z \right),\]
where the notation for $S\alpha_W$ means that $S\alpha_W(x) = ac_m$ and $S\alpha_W(y) = bc_m$.

Alternatively, the construction thus far can be achieved by taking the two $D^{4m}$-bundles over
$S^{4m}$ determined by $\xi_{x}$ and $\xi_{y}$, and plumbing them together once.

The boundary of $W_{a, b}$ is a homotopy $(8m{-1})$-sphere, which we denote by $\Sigma_{a, b}$.
In particular, $\del W_{1, 1}$ is by definition the homotopy sphere $\Sigma_Q$ from \cite[\S 2]{Krannich-Reinhold}.
More generally, $\del W_{a,b}$ is given as follows.

\begin{lemma} \label{lem:Sigma_Q}
$[\del W_{a, b}]  = [ab \Sigma_Q] \in \Theta_{8m-1}$.
\end{lemma}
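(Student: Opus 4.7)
The plan is to prove a bilinearity property of the boundary-of-plumbing map and then apply it twice. First, since $\e \oplus S$ is injective by~\eqref{eq:e=HJ} and its kernel is generated by $\tau$ (which has $\e(\tau) = 2$), there is a unique element $\xi_0 \in \pi_{4m-1}(SO_{4m})$ satisfying $\e(\xi_0) = 0$ and $S(\xi_0) = c_m$. Consequently, the clutching functions used in the construction of $W_{a,b}$ must be $\xi_x = a\xi_0$ and $\xi_y = b\xi_0$, so $W_{a,b}$ is the plumbing $W(a\xi_0, b\xi_0)$ of the corresponding disc bundles.

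The key step is the bilinearity
\[ [\del W(\xi + \xi', \eta)] = [\del W(\xi, \eta)] + [\del W(\xi', \eta)] \in \Theta_{8m-1} \]
for all $\xi, \xi', \eta \in \pi_{4m-1}(SO_{4m})$ with vanishing Euler class. To prove this, I would form the boundary connected sum $M := W(\xi, \eta) \natural W(\xi', \eta)$, which is an almost-closed $(4m{-}1)$-connected $8m$-manifold with boundary $\del W(\xi,\eta) \# \del W(\xi',\eta)$ and smooth extended symmetric form $(H^+(\Z) \oplus H^+(\Z), (S\xi, S\eta, S\xi', S\eta))$ in the basis $x_1, y_1, x_2, y_2$ coming from the two summands. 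Applying Wall's formula $\alpha_M(u+v) = \alpha_M(u) + \alpha_M(v) + \lambda_M(u,v)\tau$ from the proof of Theorem~\ref{thm:class}, together with the vanishing of all Euler classes, I would check that $\alpha := y_1 - y_2$ satisfies $\lambda_M(\alpha,\alpha) = 0$ and $\alpha_M(\alpha) = 0$, so that $\alpha$ is represented by a smoothly embedded $4m$-sphere in the interior of $M$ with trivial normal bundle. A framed interior surgery on this sphere produces an almost-closed $(4m{-}1)$-connected $8m$-manifold $M'$ with $\del M' = \del M$, and a direct calculation with $H_{4m}(M') \cong \alpha^\perp/\langle\alpha\rangle$ shows that the smooth extended symmetric form of $M'$ equals $(H^+(\Z), (S\xi + S\xi', S\eta))$, matching that of $W(\xi + \xi', \eta)$. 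Theorem~\ref{thm:class}~\eqref{item:thm-class-1} then identifies $M' \cong W(\xi+\xi',\eta)$, whence $\del W(\xi+\xi',\eta) = \del M = \del W(\xi,\eta) \# \del W(\xi',\eta)$ in $\Theta_{8m-1}$.

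Iterating bilinearity in each argument then yields $[\del W_{a,b}] = [\del W(a\xi_0, b\xi_0)] = ab \cdot [\del W(\xi_0,\xi_0)] = ab [\Sigma_Q]$, as required.

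The main obstacle will be the middle step: verifying that the interior surgery preserves $(4m{-}1)$-connectivity (standard for middle-dimensional surgery on simply-connected manifolds) and correctly computing the obstruction class $S\alpha_{M'}$. For the latter, the crucial geometric point is that every class in $\alpha^\perp$ admits a representative $4m$-sphere disjoint from the surgery sphere, via the Whitney trick in the simply-connected middle-dimensional setting, so that its normal bundle, and hence its contribution to $S\alpha_{M'}$, is inherited unchanged from $M$.
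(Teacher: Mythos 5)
Your proposal is correct, but it takes a genuinely different route from the paper. The paper does not establish bilinearity by hand: it invokes Wall's computation of the group $A^{\an{4m}}_{8m}$ of almost-closed $(4m{-}1)$-connected $8m$-manifolds under boundary connected sum, specifically that $\sigma \oplus (S\alpha)^2 \colon A^{\an{4m}}_{8m} \to \Z^2$ is an injective homomorphism; since $\sigma(W_{a,b}) = 0 = \sigma(\natural^{ab} W_{1,1})$ and $(S\alpha_{W_{a,b}})^2 = 2abc_m^2 = ab\,(S\alpha_{W_{1,1}})^2 = (S\alpha_{\natural^{ab}W_{1,1}})^2$, it concludes $W_{a,b} = \natural^{ab}W_{1,1}$ in $A^{\an{4m}}_{8m}$, so the boundaries are $h$-cobordant and hence diffeomorphic, giving $[\del W_{a,b}] = ab[\Sigma_Q]$ in one stroke. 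You instead re-derive the underlying additivity geometrically: your checks are right that $\alpha := y_1 - y_2$ in $W(\xi,\eta)\natural W(\xi',\eta)$ satisfies $\lambda(\alpha,\alpha)=0$ and $\alpha_M(\alpha)=\eta-\eta=0$ (via $\alpha_M(u+v)=\alpha_M(u)+\alpha_M(v)+\lambda(u,v)\tau$), that $\alpha$ is primitive so the surgered manifold is again $(4m{-}1)$-connected with middle homology $\alpha^\perp/\an{\alpha}$, and that the induced form is hyperbolic with values $(\xi+\xi',\eta)$; iterating then gives $[\del W(a\xi_0,b\xi_0)]=ab[\del W(\xi_0,\xi_0)]=ab[\Sigma_Q]$. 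Two small points of care: the classification you need at the end is Wall's diffeomorphism classification of \emph{almost-closed} $(4m{-}1)$-connected $8m$-manifolds by $(H,\lambda,\alpha)$ --- the statement quoted inside the paper's proof of Theorem~\ref{thm:class}, not Theorem~\ref{thm:class}~\eqref{item:thm-class-1} itself, which is phrased for closed manifolds up to almost diffeomorphism; and since all Euler numbers vanish in your situation, recording only $S\xi, S\eta$ is harmless because $(\lambda(x,x),S\alpha(x))=(\e(\alpha(x)),S(\alpha(x)))$ determines $\alpha(x)$ by injectivity of $\e \oplus S$ in \eqref{eq:e=HJ}. In terms of trade-offs, the paper's argument is shorter and only compares two characteristic numbers, while yours makes the additivity of $\del$ on plumbings explicit and geometric, at the cost of the surgery bookkeeping (connectivity, the computation of $\alpha_{M'}$ via representatives disjoint from the surgery sphere) that Wall's injectivity theorem packages away.
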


\begin{proof}
Recall from \cite[\S 17]{Wall-n-1-conn-2n+1}
the group $A^{\an{4m}}_{8m}$ of bordism classes of $(4m{-}1)$-connected
$8m$-manifolds with boundary a homotopy sphere, where the bordisms are required to be $h$-cobordisms on the boundary. Addition is via boundary connected sum. Taking the boundary defines a homomorphism $A^{\an{4m}}_{8m} \to \Theta_{8m-1}$,
and the characteristic numbers $\sigma$ and $(S\alpha)^2$ of \eqref{eq:Omega} are
also well-defined on $A^{\an{4m}}_{8m}$.
Indeed, Wall \cite[Theorems 2 \& 3]{Wall-classification-n-1-conn-2n}
proved that $\sigma \oplus (S\alpha)^2 \colon A^{\an{4m}}_{8m} \to \Z^2$ is an injective homomorphism.
Since $W_{a, b}$ satisfies $\sigma(W_{a, b}) = 0$ and $(S\alpha_{W_{a, b}})^2 = 2abc_m^2$,
we have that $(S\alpha_{W_{1, 1}})^2 = 2c_m^2$ and so
\[(S\alpha_{W_{a, b}})^2 = 2abc_m^2 = ab (S\alpha_{W_{1,1}})^2 = (S\alpha_{\natural^{ab} W_{1,1}})^2,\]
where the last equality used that $(S\alpha)^2$ is a homomorphism.
Since $\sigma \oplus (S\alpha)^2$ is injective, $W_{a,b} = \natural^{ab} W_{1,1} \in A^{\an{4m}}_{8m}$.  So $\partial W_{a,b}$ and  $\partial (\natural^{ab} W_{1,1}) = ab \Sigma_Q$ are $h$-cobordant and therefore diffeomorphic.
\end{proof}


From Lemma \ref{lem:Sigma_Q} and our assumption that $\mathfrak{bp}_m \mid ab$,
it follows that $[\Sigma_{a, b}] = 0 \in bP_{8m}$, so that there is a choice of
diffeomorphism $f \colon \Sigma_{a, b} \to S^{8m-1}$.
We write $M_{a, b, f} = W_{a, b} \cup_f D^{8m}$ for the closure of $W_{a, b}$ built using a diffeomorphism
$f \colon \Sigma_{a, b} \to S^{8m-1}$.
We will also use $M_{a, b}$ to ambiguously denote any $M_{a, b, f}$.
For any other choice of diffeomorphism $f'$, $M_{a,b,f}$ and $M_{a,b,f'}$ are almost diffeomorphic.

Let us record the values of the key invariants on $M_{a,b}$.
The stable almost diffeomorphism invariants of $M_{a,b}$ are $d_{M_{a,b}}= \mathrm{gcd}(a, b)c_m$,
$\sigma(M_{a,b}) = 0$, and $(S\alpha_{M_{a,b}})^2 = 2abc_m^2$.
The extended symmetric form of $M: =M_{a,b}$ is the same as that of $W$:
\[ \bigl( H_{4m}(M), \lambda_M, S\alpha_M \bigr) = \left( \Z^2, \twotwo{0}{1}{1}{0}, \cvec{a c_m}{b c_m} \colon \Z^2 \to \Z \right).\]
This completes the construction of the manifolds $M_{a,b}$.

\subsection{The proof of Theorem \ref{thm:4k-1-conn-8k-manifolds-examples}}
\label{subsection:proof-of-thm-31}

Now that we have constructed the $(4m{-}1)$-connected $8m$-manifolds $M_{a,b}$, we are ready to prove Theorem \ref{thm:4k-1-conn-8k-manifolds-examples}.

\begin{proof}[Proof of Theorem \ref{thm:4k-1-conn-8k-manifolds-examples}]
Let $a$ and $b$ be positive integers such that $\mathfrak{bp}_m \mid ab$.
By construction the oriented manifolds $M_{a,b}$ have hyperbolic intersection form, so Theorem \ref{thm:4k-1-conn-8k-manifolds-examples}~\eqref{item-thm-4k-1-conn-8k-0} is immediate.

As before write $d:= \gcd(a,b)c_m$ and define $A:= abc_m^2/d^2$.
Let $p_{1},\dots,p_{q_A+1}$ be the prime-power factors of $A$, which are powers of pairwise distinct primes.
Then there are $2^{q_A}$ ways to express $A$ as a product $y_iz_i$ of coprime positive integers,
 counting unordered pairs  $\{y_i,z_i\}$.
We consider the $8m$-manifolds
\[\{M_i := M_{dy_i,dz_i}\}_{i=1}^{2^{q_A}}.\]
For each $i$, $d_{M_i} = d$, $\sigma(M_i) =0$, and
$\langle (S\alpha_{M_i})^2,[M_i]\rangle = 2dy_idz_i= 2d^2 A = 2abc_m^2$.
Therefore the manifolds $M_i$ are pairwise almost stably diffeomorphic
 by Theorem~\ref{thm:stable-diffeo-classification-8-manifolds}.
 A priori they could not all lie in $\SC(M_{a, b})$, but the ambiguity of whether they are actually stably diffeomorphic can be removed by more carefully choosing the diffeomorphisms $f_i \colon \Sigma_i \to S^{8m-1}$ used to glue on $D^{8m}$ in the construction of the $M_i$. By changing the choice of the identification $f_i$ we can change $M_i$ by connected sum with an exotic sphere of our choice.
The $M_i$ were only determined up to this choice in our construction, so let us assume we have made this consistently so that $M_i \in \SC(M_{a, b})$ for every $i=1,\dots,2^{q_{A}}$.  In $\SC(M_{a, b})/\Theta_{8m}$ this choice of the $f_i$ is in any case irrelevant.

When we discuss extended symmetric forms on $\Z^2$, we will always mean with respect to a particular choice of basis.  For $M_i$, with its fixed choice of fundamental class $[M_i]$, we shall use a basis with respect to which the intersection form is represented by $H^+(\Z) = \bsm 0 & 1 \\ 1 & 0 \esm$.
We have constructed the $M_i$ so that with respect to such a basis $S\alpha_{M_i} \colon \Z^2 \to \Z$ is represented by $\bsm  ac_m \\bc_m \esm$ with $ab>0$ and $c_m \in \{1,2\}$.

The smooth extended symmetric forms of the $M_i$ are pairwise distinct, since isometries of the rank two hyperbolic intersection form can only change the sign and permute the basis elements.
The map $S\alpha_{M_i} \colon \Z^2 \to \Z$ is given by $(dy_i,dz_i)$.  Since the unordered pairs $\{dy_i,dz_i\}$ are pairwise distinct,  by the almost diffeomorphism classification of Theorem~\ref{thm:class}~\eqref{item:thm-class-1}, the $M_i$ are pairwise distinct up to orientation-preserving almost diffeomorphism.
We will be able to deduce that $|\SC(M_{a, b})/\Theta_{8m}| \geq 2^{q_{A}}$ once we have factored out by the effect of the choice of orientation of the $M_i$. In other words we must show that there are also no orientation-reversing almost diffeomorphisms from $M_i$ to $M_j$, for $i \neq j$, or equivalently that there is no orientation-preserving diffeomorphism $M_i \cong -M_j$.

Changing the orientation of $M_j$ changes the smooth extended symmetric form (with respect to the same basis for $H_{4m}(M)$) by altering the sign of the intersection form, but does not affect $S\alpha_{M_j}$.  To see this, note that while changing the orientation of $M_j$ changes the induced orientation of the fibres of the normal bundle of an embedded sphere $x$,  $S\alpha_{M_j}(x) \in \pi_{4m-1}(SO)$ is the clutching map of this normal bundle, and this is unaffected by the orientation of the fibres.

The isometries from the rank 2 hyperbolic form $H^+(\Z) = \bsm 0 & 1 \\ 1 & 0 \esm$
 to its negative $-H^+(\Z)$ consist of the self-isometries of the hyperbolic form, namely $\pm \Id$ and $\bsm 0 & 1 \\ 1 & 0 \esm$, composed with either $\bsm 1 & 0 \\ 0 & -1 \esm$ or $\bsm -1 & 0 \\ 0 & 1 \esm$.
Thus an orientation-reversing almost diffeomorphism could identify the smooth extended symmetric form characterised by $(H^+(\Z),\pm \{v,w\})$ with one of the extended symmetric forms $(-H^+(\Z),\pm\{-v,w\})$ or $(-H^+(\Z),\pm\{v,-w\})$.  But for both $M_i$ and $-M_i$, the corresponding pair of integers is $\pm\{v,w\} = \pm \{dy_i,dz_i\}$, where both elements have the same sign. So our manifolds $\{M_i\}$ are indeed distinct up to almost diffeomorphism.
This proves that $|\SC(M_{a, b})/\Theta_{8m}| \geq 2^{q_{A}}$.

Next we prove that $|\SC(M_{a, b})/\Theta_{8m}| \leq 2^{q_{A}}$. Any closed $8m$-manifold $M$ that is almost stably diffeomorphic to $M_{a,b}$ is also necessarily $(4m{-}1)$-connected, the divisibility of $S\alpha_M$ is~$d$, and the intersection form is rank 2, indefinite, and even, and therefore either hyperbolic or $-H^+(\Z)$.
If $M$ and $M_{a,b}$ are almost stably diffeomorphic then there is an orientation on $M$ such that $M$ and $M_{a,b}$ are almost stably diffeomorphic via an orientation-preserving stable diffeomorphism.  Use this orientation, and choose a basis for $H_{4m}(M)$ with respect to which the intersection form of $M$ is $H^+(\Z)$.
Observe that the manifolds $M_i$ cover all possibilities for $S\alpha_M$ while keeping $(S\alpha_M)^2$ a fixed multiple of the dual fundamental class. (If $S\alpha_M = \bsm -ac_m \\ bc_m \esm$, for example, then $(S\alpha_M)^2 = -2abc_m^2 <0$, whereas $(S\alpha_{M_{a,b}})^2 = 2abc_m^2 >0$.
This would contradict that $M_{a,b}$ and $M$ are orientation-preserving almost stably diffeomorphic.)
It follows by Theorem~\ref{thm:class}~\eqref{item:thm-class-1} that every such $M$ is almost-stably diffeomorphic to one of the~$M_i$, and therefore  $|\SC(M_{a, b})/\Theta_{8m}| \leq 2^{q_{A}}$ as desired.
This completes the proof of Theorem~\ref{thm:4k-1-conn-8k-manifolds-examples}~\eqref{item-thm-4k-1-conn-8k-i}.

To prove~\eqref{item-thm-4k-1-conn-8k-ii}, we need to estimate the size of the homotopy stable class of $M_{a,b}$ from above and below.
We begin with the upper bound.
As above, every closed $8m$-manifold $M$ stably diffeomorphic to $M_{a,b}$ has an orientation such that $M$ has hyperbolic intersection form and $d_M=d$.
The possibilities for~$S\alpha_M^h$, up to equivalence of extended symmetric forms, are therefore given by an unordered pair of elements of $\Z/j_m$, both of which are divisible by~$d$.
 Such an element of~$\Z/j_m$ lies in the subgroup generated by $\gcd(j_m, d)$, and so there are~$\ol j_m = j_m/\gcd(j_m, d)$ possibilities.
We assert that there are
\[\lmfrac{\ol j_m (\ol j_m + 1)}{2} \]
such pairs.
To see this, there are ${\ol{j}_m \choose 2} = \tmfrac{\ol j_m (\ol j_m - 1)}{2}$ choices with distinct elements $(x,y)$, and $\ol{j}_m$ choices of the form $(x,x)$. Then
$\smfrac{\ol j_m (\ol j_m - 1)}{2} + \ol{j}_m = \tmfrac{\ol j_m (\ol j_m + 1)}{2}$, which is the count asserted.
Next, we also factor out by the action of $\Z/2$ on our set of unordered pairs which multiplies both numbers by $-1$. In the case that $\ol{j}_m$ is even, there are $\tmfrac{\ol{j}_m}{2} {+} 1$ fixed points of this action of the form $(x,-x)$, and also $(0,\tmfrac{\ol{j}_m}{2})$ is a fixed point. Thus there are precisely $\tmfrac{\ol{j}_m}{2} +2$ fixed points of a $\Z/2$ action on a set with $\tmfrac{\ol j_m (\ol j_m + 1)}{2}$ elements. A short calculation then shows that there are
\[\lmfrac{\ol{j}_m^2 +2\ol{j}_m + 4}{4}\]
orbits.  A similar calculation for $\ol{j}_m$ odd gives
\[\lmfrac{(\ol{j}_m + 1)^2}{4} = \Big\lfloor \lmfrac{\ol{j}_m^2 +2\ol{j}_m + 4}{4} \Big\rfloor\]
orbits. The right hand side is equal for both parities of $\ol{j}_m$, and   gives our desired upper bound.
Note that this upper bound does not take into account the requirement for the product $ab$ to be constant within a stable diffeomorphism class.

It remains to prove that $2^{q_{A, m}} \leq |\HSC(M_{a, b})|$.
As above let $p_{1},\dots,p_{q_A+1}$ be the prime-power factors of $A$, which are powers of pairwise distinct primes.
By reordering if necessary, assume that $p_1,\dots,p_{q_{A,m}+1}$ are the prime-powers of the form $p^\ell$ where
$p \mid \ol j_m$.
(It could be that the highest exponent of $p$ that divides $\ol j_m$ is less than the highest exponent of $p$ that divides $A$.)
Recall that $d = \gcd(a,b) c_m$ and write
\[d' := d \cdot \prod_{\iota=q_{A,m}+2}^{q_{A} +1} p_\iota \text{ and } A' := \prod_{\iota=1}^{q_{A,m} +1} p_\iota.\]
Note that $d'A' =dA$.
There are $2^{q_{A,m}}$ essentially distinct ways to express $A'$ as a product $v_iw_i$ of coprime positive integers, counting unordered pairs $\{v_i,w_i\}$.
We consider the $8m$-manifolds
\[\{M_i := M_{dv_i,d'w_i}\}_{i=1}^{2^{q_{A,m}}}.\]
For each $i$, $d_{M_i} = d$, $\sigma(M_i) =0$, and
$\langle (S\alpha_{M_i})^2,[M_i]\rangle = 2dv_id'w_i = 2dd' A' = 2d^2 A = 2abc_m^2$.
Therefore the $M_i$ are pairwise almost stably diffeomorphic by Theorem~\ref{thm:stable-diffeo-classification-8-manifolds}, so up to homotopy equivalence they are all stably diffeomorphic.  As above, the ambiguity of whether they are actually stably diffeomorphic can be removed by more carefully choosing the diffeomorphisms $f_i \colon \Sigma \to S^{8m-1}$ used to glue on $D^{8m}$ in the construction of the $M_i$.
Let us assume once again that we have made this choice consistently so that  $M_i \in \HSC(M_{a, b})$ for every $i=1,\dots,2^{q_{A,m}}$.


Next we show that the $M_i$ are distinct up to homotopy equivalence. For this, by Theorem~\ref{thm:class} we need to distinguish their homotopy extended symmetric forms, by showing that the maps $S\alpha_{M_i}^h \colon \Z^2 \to \Z/j_m$ are pairwise distinct, up to precomposing with an isometry of the hyperbolic form, or to allow for the possibility of an orientation-reversing homotopy equivalence, up to an isometry between the hyperbolic form and its negative.  This means we have to show that the unordered pair of elements of $\Z/j_m$ determining $S\alpha_{M_i}^h$ and $S\alpha_{M_j}^h$ are distinct up to changing signs.

Let $M_i$ and $M_j$ be two of our manifolds, for $i \neq j$. We will show that they are not homotopy equivalent.
First, $\gcd(d,j_m)$ divides $d$, so divides $dv_i$ and $d'w_i$. As above write $\ol{j}_m := j_m/\gcd(d,j_m)$. The map $S\alpha_{M_i}^h \colon \Z^2 \to \Z/j_m$ factors as
\[S\alpha_{M_i}^h \colon \Z^2 \to \Z/\ol{j}_m \to \Z/j_m\]
for all $i$, where $\Z/\ol{j}_m \to \Z/j_m$ is the standard inclusion sending $1 \mapsto \gcd(d,j_m)$. Define
\[\ol{d} := \smfrac{d}{\gcd(d,j_m)} \text{ and } \ol{d}' := \smfrac{d'}{\gcd(d,j_m)} = \ol{d} \cdot \prod_{\iota=q_{A,m}+2}^{q_{A} +1} p_\iota.\]
We obtain
\[S\ol{\alpha}^h_{M_i} = \begin{pmatrix}
  \ol{d} v_i \\ \ol{d}'w_i
\end{pmatrix} \colon \Z^2 \to \Z/\ol{j}_m.\]
It suffices to prove that for $i \neq j$ the resulting pairs $\{\ol{d}v_i,\ol{d}'w_i\}$ and $\{\ol{d}v_j,\ol{d}'w_j\}$ are distinct, up to signs and switching the orders.
Note that $\gcd(\ol{d},\ol{j}_m) =1 = \gcd(\ol{d}',\ol{j}_m)$.

Let $p$ be a prime dividing $A'$.  Up to possibly changing the orders of $v_i$ and $w_i$, and of $v_j$ and $w_j$, assume that $p$ divides $v_i$ and $v_j$.
If so, $p$ does not divide $w_i$ and $w_j$, since $\gcd(v_i,w_i) = 1 = \gcd(v_j,w_j)$.

Now let $q \neq p$ be a prime dividing $A'$ such that either:
\begin{enumerate}[(i)]
  \item $q$ divides $w_j$ but $q$ does not divide $w_i$; or
  \item $q$ divides $w_i$ but $q$ does not divide $w_j$.
\end{enumerate}
Without loss of generality suppose that (i) holds.  Then also $q$ divides $v_i$ but~$q$ does not divide~$v_j$, since both pairs $(v_i,w_i)$ and $(v_j,w_j)$ are coprime.
There exists such a $q$, unless $q_{A,m} =0$, in which case $2^{q_{A,m}} =1$ and we have nothing to prove anyway. So we can assume that $q_{A,m}$ is positive and that such a $q$ exists.
The idea is that the primes $p$ and $q$ are chosen so that they divide the same element of the unordered pair associated with the homotopy extended symmetric form for $M_i$, but divide different elements of the unordered pair for $M_j$. It is this distinction we want to detect.

We consider the images of the four elements $\ol{d}v_i$, $\ol{d}'w_i$, $\ol{d}v_j$, and $\ol{d}'w_j$ of $\Z/\ol{j}_m$ under the canonical surjections
%
\[\rho_p \colon \Z/\ol{j}_m \to \Z/p \text{ and } \rho_q \colon \Z/\ol{j}_m \to \Z/q.\]
Since $p$ and $q$ divide $\ol{j}_m$ and $\gcd(\ol{d},\ol{j}_m)=1 = \gcd(\ol{d}',\ol{j}_m)$, we know that $p$ and $q$ do not divide $\ol{d}$ and do not divide $\ol{d}'$.
Therefore for the $\Z/p$ reductions we have
\[\rho_p(\ol{d}v_i) = 0,\;  \rho_p(\ol{d}'w_i) \neq 0,\;   \rho_p(\ol{d}v_j) = 0, \text{ and } \rho_p(\ol{d}'w_j) \neq 0, \]
while for the $\Z/q$ reductions we have:
\[\rho_q(\ol{d}v_i) = 0,\;  \rho_q(\ol{d}'w_i) \neq 0,\;   \rho_q(\ol{d}v_j) \neq 0, \text{ and } \rho_q(\ol{d}'w_j) = 0. \]
We indicate one of these calculations briefly, that $\rho_p(\ol{d}'w_i) \neq 0$, to give the idea. If $\ol{d}'w_i$ were $0$ modulo $p$ then for some $a,b \in \Z$ we would have $ap = \ol{d}'w_i + b\ol{j}_m \in \Z$. But $p \mid \ol{j}_m$ and so $p$ divides $\ol{d}'w_i$, which is a contradiction.

Note that switching the sign of an element in $\Z/\ol{j}_m$ preserves whether or not its image under $\rho_p$ or $\rho_q$ is zero.
Let us summarise the calculations above. For $\{\ol{d}v_i,\ol{d}'w_i\}$, one element is zero under the reductions modulo $p$ and $q$, while the other element is nonzero under both reductions.
On the other hand, for the pair $\{\ol{d}v_j,\ol{d}'w_j\}$ we have shown that precisely one element is zero under each of the modulo $p$ and modulo $q$ reductions.
Switching the orders of the elements and switching signs preserves these descriptions, and therefore~$M_i$ and ~$M_j$ are not homotopy equivalent.
It follows that $|\HSC(M_{a, b})|$ is at least $2^{q_{A, m}}$, as desired.
\end{proof}

\section{\texorpdfstring{spin$^{c}$\hspace{-1pt}}{Spin-c} structures on 4-manifolds}\label{section:spin-c-structures}

As explained in the introduction, the homotopy stable class is trivial for every closed, simply-connected 4-manifold.  However a parallel phenomenon occurs when one considers equivalence classes of spin$^{c}$ structures on the tangent bundle. In this section we illustrate this on $S^2 \times S^2$.

For all $n \geq 2$, the group $\Spin_n$ is the connected double cover of $SO_n$ and the group
$\Z/2$ acts by deck transformations.
The group $\Z/2$ acts on $U(1) \cong S^1$ by complex conjugation.  We quotient out by the diagonal action on the product to obtain:
\[\Spinc_n := U(1) \times_{\Z/2} \Spin_n\]
There are well-defined maps
\[\xymatrix{  U(1) & \Spinc_n \ar[r]^(0.5){\pr_2} \ar[l]_(0.45){\pr_1} &  SO_n  }\]
obtained as the composition of the double cover $\Spinc_n \to U(1) \times SO_n$ with the projections.

There are natural inclusions $\Spinc_n \hookrightarrow \Spinc_{n+1}$ and
the stable spin$^c$ group is defined by $\Spinc := \colim_{n \to \infty} \Spinc_n$.
There are also stable projections
\[\xymatrix{  U(1) & \Spinc \ar[r]^(0.5){\pr_2} \ar[l]_(0.45){\pr_1} &  SO,}\]
where $SO$ is the stable special orthogonal group.
We will use the same notation $\pr_1$, $\pr_2$  for the induced maps on classifying spaces.

\begin{definition}
\label{def:Spinc}
%
%
Let $M$ be a closed, oriented $n$-manifold.
A \emph{spin$^{c}$\hspace{-1pt} structure} on $M$ is a lift
\[\xymatrix{& B\Spinc \ar[d]^{\pr_2} \\ M \ar@{-->}[ur]^{\s} \ar[r]^(0.45){\tau_M} & BSO}\]
of the stable tangent bundle's classifying map to $B\Spinc$.
\end{definition}
\noindent
For more background on spin$^{c}$\hspace{-1pt} structures on 4-manifolds, we refer to e.g.\ \cite[Section~2.4.1]{GompfStip} and \cite[Sections~10.2~\&~10.7]{Scorpan}.

\begin{lemma}[{\cite[Prop.~2.4.16]{GompfStip}}]\label{lemma:spin-c-exists}
Every oriented $4$-manifold admits a spin$^{c}$\hspace{-1pt} structure.
\end{lemma}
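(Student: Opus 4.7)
The plan is to reformulate the existence of a spin$^c$\hspace{-1pt} structure as a lifting problem, identify the primary (and only) obstruction, and then verify its vanishing by invoking Wu's formula and Poincar\'e duality. I will treat the case that is relevant here, of closed oriented $4$-manifolds; the argument in the open case follows from the same obstruction, which automatically vanishes for dimension reasons.

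First, I would analyse the fibration produced by $\pr_2$. The kernel of $\pr_2 \colon \Spinc_4 \to SO_4$ consists of classes $[z,1]$ with $z \in U(1)$, giving an isomorphism $\ker(\pr_2) \cong U(1)$ and a fibration sequence
\[BU(1) = K(\Z,2) \longrightarrow B\Spinc_4 \xrightarrow{\pr_2} BSO_4.\]
Since the fibre is an Eilenberg--MacLane space concentrated in degree $2$, obstruction theory shows that the single obstruction to lifting $\tau_M \colon M \to BSO_4$ across $\pr_2$ lies in $H^3(M;\Z)$, and a standard identification rewrites it as the integral third Stiefel--Whitney class $W_3(M) = \beta(w_2(M))$, where $\beta$ is the Bockstein coming from $0 \to \Z \xrightarrow{2} \Z \to \Z/2 \to 0$.

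Second, by exactness of the associated long exact sequence
\[H^2(M;\Z) \longrightarrow H^2(M;\Z/2) \xrightarrow{\,\beta\,} H^3(M;\Z),\]
the class $W_3(M)$ vanishes precisely when $w_2(M)$ admits an integral lift. To produce such a lift I would invoke Wu's formula: for a closed oriented $4$-manifold, $w_2(M)$ is the unique class in $H^2(M;\Z/2)$ that is \emph{characteristic} for the mod~$2$ intersection form, i.e.\ $w_2(M)\cup x = x \cup x$ in $H^4(M;\Z/2)$ for every $x \in H^2(M;\Z/2)$. On the free part of $H^2(M;\Z)$, the integral intersection form is unimodular by Poincar\'e duality, so it admits a characteristic element $c$ satisfying $c\cdot x \equiv x\cdot x \pmod 2$ for all $x$. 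Lifting $c$ back to $H^2(M;\Z)$ and reducing mod $2$ then yields a class whose pairing with every element of $H^2(M;\Z/2)$ agrees with that of $w_2(M)$. Nondegeneracy of the $\Z/2$ intersection pairing (again from Poincar\'e duality) forces this reduction to equal $w_2(M)$ on the nose, proving $W_3(M)=0$ and concluding the argument.

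The step that needs the most care is the last one, where torsion in $H^2(M;\Z)$ enters: one must lift a characteristic element from the free quotient back to $H^2(M;\Z)$ and ensure that possible $2$-torsion discrepancies disappear upon mod $2$ reduction. This is the step handled by the nondegeneracy of the $\Z/2$ intersection pairing, which collapses the ambiguity. Everything else in the plan --- the identification of the fibre $K(\Z,2)$, the interpretation of the obstruction as $\beta(w_2)$, and the Bockstein exact sequence --- is essentially formal.
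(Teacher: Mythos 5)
The paper gives no proof of its own here --- it simply quotes \cite[Proposition~2.4.16]{GompfStip} --- so the only question is whether your obstruction-theoretic argument is complete. Your identification of the obstruction is correct: the fibre of $\pr_2$ is $BU(1)=K(\Z,2)$, the unique obstruction to a lift of $\tau_M$ is $W_3(M)=\beta(w_2(M))\in H^3(M;\Z)$, and by the Bockstein sequence it vanishes exactly when $w_2(M)$ admits an integral lift. For the manifolds this paper actually uses (closed and simply-connected, e.g.\ $S^2\times S^2$ and $W_g$), your argument is fine: there $H^2(M;\Z)$ is free, reduction mod $2$ is surjective, and the characteristic-element argument closes the loop.

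For the statement as written (every oriented $4$-manifold, which is what the cited proposition asserts), the last step has a genuine gap, and it sits exactly where you flag the delicacy. You only verify that the reduction of your lifted characteristic element and $w_2(M)$ pair equally against mod-$2$ reductions of \emph{integral} classes; when $H_1(M;\Z)$ has $2$-torsion the reduction map $H^2(M;\Z)\to H^2(M;\Z/2)$ is not surjective (its cokernel is the $2$-torsion of $H^3(M;\Z)$), so nondegeneracy of the full $\Z/2$ cup pairing does not let you conclude equality. Concretely, take $M$ the Enriques surface: the intersection form on $H^2(M;\Z)/\mathrm{torsion}$ is even, so $c=0$ is characteristic and your candidate reduces to $0$, while $w_2(M)\neq 0$ (the Enriques surface is not spin); both classes pair trivially with every reduction of an integral class, so your criterion cannot separate them and the argument would wrongly conclude $w_2(M)=0$. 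Dealing with this $2$-torsion is precisely the content of the Hirzebruch--Hopf/Gompf--Stipsicz proof (and of Teichner--Vogt in the non-closed case). Your side remark that the open case is automatic ``for dimension reasons'' is also not right: an open $4$-manifold is homotopy equivalent to a $3$-complex, so $H^4$ vanishes, but the obstruction lives in $H^3(M;\Z)$, which need not vanish. In short: adequate for the simply-connected setting the paper needs, but not a proof of the lemma as stated.
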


\begin{proof}
In~\cite{GompfStip}, spin$^{c}$ structures on $4$-manifolds are defined by using $B\Spinc_4$ in place of $B\Spinc$, and~\cite[Prop.~2.4.16]{GompfStip} proves the existence of a lift of the classifying map of the (unstable) tangent bundle to~$B\Spinc_4$.
Composing with the maps to the colimit, this implies the existence of a spin$^{c}$-structure in the sense of Definition~\ref{def:Spinc}.
\end{proof}

\begin{definition}[Equivalence of spin$^c$ structures]\label{defn:equiv-of-spinc}
Let $M$ be a closed, oriented $4$-manifold.
\begin{enumerate}
\item\label{item:defn-equiv-spinc} Two spin$^{c}$\hspace{-1pt} structures $\s_1$ and $\s_2$ on $M$ are \emph{equivalent} if there is an orientation-preserving diffeomorphism $f \colon M \to M$
such that $\s_1, \s_2 \circ f \colon M \to B\Spinc$ are homotopic over $BSO$; i.e.\ there is homotopy $K$
and a commutative diagram
\[\xymatrix{& B\Spinc \ar[d]^{\pr_2} \\ M \times I \ar[ur]^K \ar[r]^{\tau_{M \times I}} & BSO}\]
where $K$ restricts to $\s_1$ on $M \times \{0\}$ and $\s_2 \circ f$ on $M \times \{1\}$.
\item Two spin$^{c}$\hspace{-1pt} structures $\s_1$ and $\s_2$ are \emph{homotopic} if they are equivalent as in the previous item, with $f=\Id_M$.
\end{enumerate}
\end{definition}



Recall that the projection onto the first component gives a compatible collection of maps $\pr_1 \colon B\Spinc_n \to BU(1)$, $n \in \mathbb{N}$. Therefore, passing to the colimit and keeping the same notation, we obtain a map $\pr_1 \colon B\Spinc \to BU(1)$.

\begin{definition}
Via the map $\pr_1 \colon B\Spinc \to BU(1)$, a spin$^{c}$\hspace{-1pt} structure $\s \colon M \to B\Spinc$ on a 4-manifold $M$ determines a line bundle $\mathcal{L}_\s$.  The \emph{first Chern class} of $\s$ is defined by 
\[c_1(\s) := c_1(\mathcal{L}_\s) \in H^2(M).\]
\end{definition}

Noting that $BU(1)$ is a $K(\Z,2)$, $c_1(\s)$ corresponds to $\pr_1 \circ \s \colon M \to BU(1)$ under the isomorphism $H^2(M) \cong [M,BU(1)]$.  The map $\pr_1$ can be interpreted as a determinant, and $\mathcal{L}_\s$ is called the \emph{determinant line bundle of $\s$}.
The next lemma follows from~\cite[Proposition~2.4.16]{GompfStip}.

\begin{lemma}\label{spinc-lemma}
 Let $M$ be a closed, oriented 4-manifold.
\begin{enumerate}[(i)]
\item For every spin$^{c}$\hspace{-1pt} structure $\s$ on $M$, reduction modulo two is such that:
\begin{align*}
  H^2(M) &\to H^2(M;\Z/2) \\
  c_1(\s) & \mapsto w_2(M),
\end{align*}
where $w_2(M)$ is the second Stiefel-Whitney class.
\item There is a transitive action of $H^2(M)$ on the set of homotopy classes of spin$^{c}$\hspace{-1pt} structures on $M$, such that for $x \in H^2(M)$ we have
\[c_1(x \cdot \s) = c_1(\s) + 2x \in H^2(M).\]
\item If $H_1(M)$ is 2-torsion free, then this action is free.
\end{enumerate}
  \end{lemma}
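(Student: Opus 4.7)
The strategy is to exploit the central extension $1 \to U(1) \to \Spinc_4 \to SO_4 \to 1$, which upon delooping yields a principal $BU(1)$-fibration $BU(1) \to B\Spinc_4 \xrightarrow{\pr_2} BSO_4$, together with the fact that the composite $U(1) \xrightarrow{z \mapsto [z,1]} \Spinc_4 \xrightarrow{\pr_1} U(1)$ is the squaring map $z \mapsto z^2$.

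For part (i), the plan is to first establish the universal identity
\[\pr_1^*(c_1 \bmod 2) = \pr_2^*(w_2) \in H^2(B\Spinc_4; \Z/2),\]
and then pull back along $\s$. To prove this universal identity, compute $\pi_1(\Spinc_4)$: the double cover $U(1) \times \Spin_4 \to \Spinc_4$ yields the short exact sequence $0 \to \Z \to \pi_1(\Spinc_4) \to \Z/2 \to 0$, and one verifies that $\pi_1(\Spinc_4) \cong \Z$, generated by a loop of the form $t \mapsto [e^{i\pi t}, \gamma(t)]$ with $\gamma$ a path from $1$ to $-1$ in $\Spin_4$. Under $(\pr_{1*}, \pr_{2*}) \colon \pi_1(\Spinc_4) \to \pi_1(U(1)) \oplus \pi_1(SO_4) \cong \Z \oplus \Z/2$, this generator maps to $(1, 1)$. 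Since the relevant mod-$2$ classes in $H^2(BU(1); \Z/2)$ and $H^2(BSO_4; \Z/2)$ are each detected by pairing with $\pi_2$, and since the Serre spectral sequence for the fibration (with differential $d_3$ acting on the degree-$2$ fibre class by $\mathrm{Sq}^1 w_2 = w_3$) shows $H^2(B\Spinc_4; \Z/2) \cong \Z/2$, both pullbacks must equal the nonzero generator.

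For part (ii), the point is that $\pr_2$ is a principal $K(\Z,2)$-fibration classified by $W_3 \in H^3(BSO_4; \Z)$. Lemma~\ref{lemma:spin-c-exists} guarantees that at least one lift of $\tau_M$ exists, so the set of homotopy classes of lifts over $BSO_4$ forms a torsor under $[M, BU(1)] = H^2(M; \Z)$; this directly yields the transitive action. For the $c_1$ formula, note that the composite $\pr_1 \circ \iota \colon BU(1) \to B\Spinc_4 \to BU(1)$ is induced by $z \mapsto z^2$, which acts by multiplication by $2$ on $H^2(-;\Z)$. Translating a lift $\s$ by $x \in [M, BU(1)]$ in the fibre therefore shifts $c_1 = \pr_1 \circ \s$ by $2x$.

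Part (iii) is immediate from the torsor structure established in (ii), since the action on a torsor is automatically free, but it also admits a direct verification: if $x \cdot \s = \s$, the $c_1$ formula forces $2x = 0 \in H^2(M; \Z)$, and the universal coefficient theorem identifies the torsion subgroup of $H^2(M;\Z)$ with the torsion of $H_1(M;\Z)$, so $2$-torsion-freeness of $H_1(M)$ yields $x = 0$. The main obstacle I anticipate is the careful bookkeeping in (i): correctly identifying a generator of $\pi_1(\Spinc_4)$ and tracking its images through the $\Z/2$ quotient defining $\Spinc_4$ requires care, but once this is in place the remaining steps follow mechanically from the torsor formalism and the universal coefficient theorem.
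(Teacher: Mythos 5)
Your proposal is correct, but it takes a genuinely different route from the paper: the paper offers no argument at all, simply deducing the lemma from the cited Proposition~2.4.16 of Gompf--Stipsicz, whereas you reprove that result from scratch via the principal $K(\Z,2)$-fibration $BU(1) \to B\Spinc_4 \xrightarrow{\pr_2} BSO_4$ coming from the central extension $1 \to U(1) \to \Spinc_4 \to SO_4 \to 1$. Your computation $\pi_1(\Spinc_4)\cong\Z$, with the generator mapping to $(1,1)\in \pi_1(U(1))\oplus\pi_1(SO_4)$, is right and pins down both $\pr_1^*(c_1 \bmod 2)$ and $\pr_2^*(w_2)$ as the unique nonzero element of $H^2(B\Spinc_4;\Z/2)\cong\Z/2$; note that you do not even need the transgression claim $d_3(t)=\mathrm{Sq}^1 w_2$ (which tacitly presupposes the identification of the $k$-invariant with $W_3$), since simple connectivity and $\pi_2(B\Spinc_4)\cong\Z$ already give $H^2(B\Spinc_4;\Z/2)\cong\Z/2$ by Hurewicz and universal coefficients. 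For (ii), the torsor formalism for lifts through a principal fibration matches the paper's definition of spin$^c$ structures as lifts of $\tau_M$ up to vertical homotopy, and your derivation of the shift $c_1(x\cdot\s)=c_1(\s)+2x$ from the squaring map $U(1)\to\Spinc_4\xrightarrow{\pr_1}U(1)$ is the right mechanism. Your remark on (iii) is also sound: the torsor structure makes the action free unconditionally, so the $2$-torsion-freeness of $H_1(M)$ is not needed for freeness as such (it is what makes $c_1$ injective on spin$^c$ structures), and your backup argument via $2x=0$ and the universal-coefficient identification of the torsion of $H^2(M)$ with that of $H_1(M)$ is exactly the argument the stated hypothesis is designed for. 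In short, the paper's citation is economical, while your argument is self-contained, works directly with the lift-theoretic definition used in the paper, and proves slightly more than the lemma asserts; the only points left implicit (that delooping a central $U(1)$-extension yields a principal $BU(1)$-fibration, and the identification of its classifying class) are standard and harmless here.
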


  \begin{proof}
 As mentioned during the proof of Lemma~\ref{lemma:spin-c-exists}, in~\cite{GompfStip} spin$^{c}$ structures are defined by using $B\Spinc_4$ in place of $B\Spinc$ and therefore the Chern class of a spin$^{c}$-structure is defined using $\pr_1 \colon B\Spinc_4 \to BU(1)$.
 However, since the map $B\Spinc_4 \to BU(1)$ factors through~$B\Spinc$, both definitions of the Chern class coincide and so the lemma follows from \cite[Proposition~2.4.16]{GompfStip}.
  \end{proof}

As a consequence every characteristic cohomology class $y \in H^2(M)$ can be realised as the first Chern class of some spin$^{c}$\hspace{-1pt} structure on $M$, and if $H_1(M)$ is 2-torsion free then this spin$^{c}$\hspace{-1pt} structure is uniquely determined by~$y$.
Here recall that $y$ being \emph{characteristic} means that $\langle x \cup x,[M] \rangle \equiv \langle x\cup y ,[M] \rangle \mod{2}$ for every $x \in H^2(M)$.

The next lemma is immediate from the fact that the Chern class is an invariant of a spin$^{c}$\hspace{-1pt} structure, and is natural.

\begin{lemma}\label{lemma:c1-invariant}
If two spin$^{c}$\hspace{-1pt} structures $\s_1$ and $\s_2$ on a closed, oriented 4-manifold $M$ are equivalent, then there is an isometry of the intersection form
on $H^2(M)$ sending $c_1(\s_1)$ to $c_1(\s_2)$. \qed
\end{lemma}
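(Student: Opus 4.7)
The plan is to unwind the definition of equivalence, transfer it through $\pr_1$ to an equality of first Chern classes up to pullback by the diffeomorphism, and then observe that an orientation-preserving diffeomorphism automatically gives an isometry of the cup product pairing.

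First I would apply the definition of equivalent spin$^c$ structures: there exists an orientation-preserving diffeomorphism $f \colon M \to M$ and a homotopy $K \colon M \times I \to B\Spinc_4$ (over $BSO_4$) from $\s_1$ to $\s_2 \circ f$. The fact that $K$ lies over $BSO_4$ plays no role in what follows; all I need is that $\s_1$ and $\s_2 \circ f$ agree in $[M, B\Spinc_4]$. Composing $K$ with $\pr_1 \colon B\Spinc_4 \to BU(1)$ produces a homotopy $\pr_1 \circ \s_1 \simeq \pr_1 \circ \s_2 \circ f$ in $[M, BU(1)]$. Using the identification $H^2(M) \cong [M, BU(1)]$ and the characterisation of $c_1$ as the class classifying $\pr_1 \circ \s$, this translates into the identity
\[
c_1(\s_1) \;=\; (\pr_1 \circ \s_2 \circ f)^{\ast}(\iota) \;=\; f^{\ast}(c_1(\s_2)) \;\in\; H^2(M),
\]
where $\iota \in H^2(BU(1))$ is the tautological generator.

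To conclude, I would invoke standard naturality: since $f$ is a diffeomorphism, $f^{\ast} \colon H^2(M) \to H^2(M)$ is a ring isomorphism, and since $f$ is orientation-preserving, $f_{\ast}[M] = [M]$. These together imply that $f^{\ast}$ preserves the cup product pairing $H^2(M) \times H^2(M) \to \Z$, $(x,y) \mapsto \langle x \cup y, [M]\rangle$. Its inverse $(f^{-1})^{\ast}$ is then an isometry of this pairing sending $c_1(\s_1)$ to $c_1(\s_2)$, which is exactly what the lemma asserts. There is no real obstacle here; the only point requiring mild care is noting that the homotopy ``over $BSO_4$'' is stronger than we need, so one can simply project to $BU(1)$ and forget the rest.
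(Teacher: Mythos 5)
Your argument is correct and is exactly the spelled-out version of the paper's one-line justification, which says the lemma is immediate from the invariance and naturality of the first Chern class: you unwind the equivalence to get $c_1(\s_1)=f^*(c_1(\s_2))$ and then use that an orientation-preserving diffeomorphism induces an isometry of the cup product pairing. No gaps; the direction bookkeeping with $(f^{-1})^*$ is handled correctly.
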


To define stable equivalence of spin$^{c}$\hspace{-1pt} structures, fix once and for all the preferred spin$^{c}$\hspace{-1pt} structure $\s_g$ on
$W_g := \#^g S^2 \times S^2$, to be the spin$^{c}$\hspace{-1pt} structure with $c_1(\s_g) = 0 \in H^2(W_g)$. Such a spin$^{c}$\hspace{-1pt} structure exists by Lemma~\ref{spinc-lemma}.

\begin{definition}[Stable equivalence of spin$^c$ structures]\label{defn:stable-equiv-spinc}
Two spin$^{c}$\hspace{-1pt} structures $\s_1$ and $\s_2$ on a closed, oriented 4-manifold~$M$ are \emph{stably equivalent} if there exists $g \in \mathbb{N}_0$ such that the induced spin$^{c}$\hspace{-1pt} structures on $M \# W_g$, extending $\s_1$ and $\s_2$ using the fixed spin$^{c}$\hspace{-1pt} structure $\s_g$ on $W_g$, are equivalent.
\end{definition}

The stable classification of spin$^{c}$\hspace{-1pt} structures $\s$ on simply-connected
$4$-manifolds $M$ is analogous to the almost stable classification of $(4m{-}1)$-connected $8m$-manifolds from
Theorem~\ref{thm:stable-diffeo-classification-8-manifolds}.

We will want to apply Kreck's stable diffeomorphism theorem~\cite[Theorem~C]{KreckSurgeryAndDuality}, with appropriate $1$-smoothings.  In particular, a 1-smoothing has to be 2-connected.  While we will work with simply-connected 4-manifolds, so $\pi_1(M) =0 = \pi_1(B\Spinc)$, the map $M \to B\Spinc$ classifying a spin$^c$\hspace{-1pt} structure need not be surjective on $\pi_2$.  To mitigate this we make the following definition.

Given $(M, \s)$ we define the \emph{divisibility} $d(\s) \in \mathbb{N}_0$ of $c_1(\s)$ by the equation
\[ c_1(\s)(H_2(M)) = d(\s) \Z.\]
%
Let $B\Spinc(d)$ be the homotopy fibre of the mod~$d$ spin$^c$ first Chern class, so that there is a fibre sequence
\[B\Spinc(d) \xrightarrow{\pi} B\Spinc \to K(\Z/d, 2).\]
%
By construction, $\pi \colon B\Spinc(d) \to B\Spinc$ is a fibration, and the universal stable bundle over $B\Spinc$ pulls back to a stable bundle over $B\Spinc(d)$.

\begin{definition}
\label{def:Spinc-d}
Let $M$ be a closed, oriented $n$-manifold.
A \emph{spin$^{c}(d)$\hspace{-1pt} structure} on $M$ is a lift
\[\xymatrix{& B\Spinc(d) \ar[d]^{\pr_2 \circ \pi} \\ M \ar@{-->}[ur]^{\s(d)} \ar[r]^(0.45){\tau_M} & BSO}\]
of the stable tangent bundle's classifying map to $B\Spinc(d)$.
We denote a manifold with a $B\Spinc(d)$-structure by $(M, \s(d))$
and the corresponding bordism groups by $\Omega_*^{\Spinc(d)}$.
\end{definition}

\begin{lemma}
\label{lem:spinc(d)}
The following assertions hold:
\begin{enumerate}
\item $\pi_2(B\Spinc(d))=\Z$ for $d \neq 0$ and $\pi_2(B\Spinc(d))=0$ for $d=0$;
\item if $(M,\s)$ is a $\Spinc$-manifold, then $M$ is a $\Spinc(d)$-manifold for $d:=d(\mathfrak{s})$ and the map $\s(d) \colon M \to B\Spinc(d)$ is $2$-connected.
\end{enumerate}
\end{lemma}

\begin{proof}
We have $\pi_2(B\Spinc) \cong \Z$, and so the long exact sequence of a fibration in homotopy groups yields
\[0=\pi_3(K(\Z/d,2)) \to \Z \to \Z \twoheadrightarrow \Z/d = \pi_2(K(\Z/d,2)) \xrightarrow{} 0.\]
Since also $\pi_1(B\Spinc)=0$ we have that $B\Spinc(d)$ is $1$-connected and
$\pi_2(B\Spinc(d)) \cong \Z$ for $d\neq 0$.
A similar calculation shows that $\pi_2(B\Spinc(0))=\pi_2(B\Spin) =0.$ In fact it then follows from Whitehead's theorem that the map $B\Spin \to B\Spinc(0)$, obtained from factoring the canonical map $B\Spin \to B\Spinc$ through $B\Spinc(0)$, is a homotopy equivalence.
This concludes the proof of the first assertion.

We now assume that $(M,\s)$ is a $\Spinc$-manifold and prove the second assertion.
The first point follows by observing that in the exact sequence
$$ [M,B\Spinc(d)] \to [M,B\Spinc] \to [M,K(\Z/d,2)]=H^2(M;\Z/d),$$
the spin$^{c}$ structure $\mathfrak{s} \in  [M,B\Spinc]$ is mapped to zero, by definition of $d(\s)$.
It only remains to show that $\s(d)$ is $2$-connected.
Since this is clear for $d=0$, we assume that $d \neq 0$.
As we know that $\pi_2(B\Spinc)=\Z$ and $\pi_2(B\Spinc(d))=\Z$, the long exact sequence of the fibration and the definition of $d=d(\s)$ imply that $\im(\s_*)=d\Z \subseteq \Z=\pi_2(B\Spinc)$ and therefore $\s(d)$ is surjective on $\pi_2$, as required.
\end{proof}

Our aim is now to construct an injective map $\Omega_*^{\Spinc(d)} \to \Z \oplus \Z$.
The first component of this map will be the signature, while the second will arise as a characteristic number obtained from $\s(d) \colon M \to B\Spinc(d)$ by pulling back a universal class $c_1/d \in H^2( B\Spinc(d))$ that we now define.
For $d \neq 0$, Lemma~\ref{lem:spinc(d)} implies that $H^2(B\Spinc(d))$ is an infinite cyclic group.
It is generated by a class $c_1/d \in  H^2(B\Spinc(d))$
such that the pullback $\pi^*(c_1)$ of the spin$^c$ first Chern class, satisfies
\begin{equation}\label{equation:defining-prop-ofc1-over-d}\tag{$\varpi$}
  d \big( c_1/d \big) = \pi^*(c_1) \in H^2(B\Spinc(d)).
\end{equation}
For $d=0$, $H^2(B\Spinc(0)) = H^2(B\Spin) =0$, and we set $c_1/d =0$.
As is conventional for characteristic classes, given a $\Spinc(d)$-structure $\s(d) \colon M \to B\Spinc(d)$ we write $c_1/d(\s(d)) := \s(d)^*(c_1/d) \in H^2(M)$.

\begin{lemma} \label{lem:spincd}
There is an injective homomorphism
\begin{align*}
  \Theta \colon \Omega_4^{\Spinc(d)} &\to \Z \oplus \Z \\
[N,\s(d)]  & \mapsto  \big( \sigma(N), \langle(c_1/d(\s(d)))^2,[N]\rangle \big).
\end{align*}
\end{lemma}

%
%

\begin{proof}
The given map is a homomorphism, and is a bordism invariant because the signature is bordism invariant, and because $c_1^2$ is a characteristic number and therefore so is $(c_1/d)^2$.
%

It remains to prove injectivity of $\Theta$.
Let $(M, \s(d))$ be a spin$^c(d)$-manifold with vanishing signature and
$(c_1/d(\s(d)))^2 = 0$.
Since $\pi_1(B\Spinc(d)) = 0$,
after preliminary surgeries over $B\Spinc(d)$ 
we may assume that $M$ is simply-connected.
Since $\sigma(M) = 0$, the homeomorphism classification of smooth simply-connected
$4$-manifolds~\cite{Freedman-JDG-82} means that we can assume that $M$ is homeomorphic to one
of the following model manifolds:
\[ M \cong_{\rm TOP}
\begin{cases}
W_g & \text{$d$ even,} \\
X_g & \text{$d$ odd,}
\end{cases}\]
where $X_g := \#^g S^2 \wt{\times} S^2$.
In other words $M$ is a possibly exotic $W_g$ or $X_g$.
Now, exotic pairs of simply-connected 4-manifolds are $h$-cobordant~\cite[Theorem~2]{Wall-on-simply-conn-4mflds}, and the spin$^c(d)$-structure on $M$
propagates along an $h$-cobordism to a spin$^c(d)$ structure on either $W_g$ or $X_g$, as appropriate.  Hence we may assume
that $(M, \s(d))$ is diffeomorphic to either $(W_g, \s'_g(d))$ or $(X_g, \s''_g(d))$ for some spin$^c(d)$-structures
$\s'_g(d)$ or $\s''_g(d)$.
Now, $M$ has a standard coboundary $N$, $\partial N = M$, where
\[ N \cong
\begin{cases}
Y_g & \text{$d$ even,} \\
Z_g & \text{$d$ odd.}
\end{cases}\]
Here $Y_g := \natural^g D^3 \times S^2$ and $Z_g := \natural^g D^3 \widetilde{\times} S^2$, where $D^3 \widetilde{\times} S^2 \to S^2$ is the nontrivial bundle.
By assumption $(c_1/d(\s(d)))^2 = 0$ and it follows that $c_1/d(\s(d)) \in L$,
for some lagrangian $L \subseteq H^2(M)$.
Now, the automorphisms of the intersection form act transitively on the set of lagrangians (see for example \cite[pp.\ 144-5]{Wall-on-simply-conn-4mflds}), and Wall~\cite[p.\ 144]{Wall-on-simply-conn-4mflds} also showed that every isometry of the intersection form of $H^2(M)$ is realised by a diffeomorphism.
Hence we may assume that $c_1/d(\s(d)) \in H^2(M)$ lies in the standard lagrangian of $H^2(M)$, and so is the restriction to the boundary of $c$ for some $c \in H^2(N)$.
Since $H^2(N) \to H^2(M)$ is onto a summand, it follows that $N$ admits a spin$^c(d)$-structure $\s_{N}(d)$ that restricts to $\s(d)$.
Hence $(N, \s_{N}(d))$ is a spin$^c(d)$ null-bordism of $(M, \s)$, and so $\Theta$ is indeed injective.
\end{proof}

Next, using Lemma~\ref{lem:spincd} we deduce the stable classification of spin$^{c}$\hspace{-1pt} structures on simply-connected 4-manifolds.

  The fibration sequence defining $B\Spinc(d)$ gives rise to an exact sequence
  \[[M,\Omega K(\Z/d,2)] \to [M,B\Spinc(d)] \to [M,B\Spinc] \to [M,K(\Z/d,2)].\]
If $M$ is simply-connected then $[M,\Omega K(\Z/d,2)] \cong [M,K(\Z/d,1)] \cong H^1(M;\Z/d) =0$, so if a lift of $\Spinc$ structure $\s$ to a spin$^{c}(d)$ structure  $\s(d)$ exists, then it is essentially unique.

A spin$^{c}(d)$ structure $\s(d)$ on $M$ induces a spin$^{c}(d)$ structure on $M \# W_g$, for any $g$: as in Definition~\ref{defn:stable-equiv-spinc} we extend the associated $\Spinc(d)$ structure on $M$ by the spin$^c(d)$ structure on $W_g$ with $c_1=0$.  By the previous paragraph, since $W_g$ is simply connected, there is an essentially unique such $\Spinc(d)$ structure on $W_g$.
Then a lift to a spin$^{c}(d)$ structure on $M$ determines such a lift on $M \# W_g$.
We can therefore define stable equivalence of spin$^{c}(d)$ structures. The definition is identical to the definition for spin$^{c}(d)$ structure, just replacing $\Spinc$ with $\Spinc(d)$ throughout Definition~\ref{defn:equiv-of-spinc}~\eqref{item:defn-equiv-spinc} and Definition~\ref{defn:stable-equiv-spinc}.

\begin{theorem}\label{thm:stable-classification}
Let $M$ be a closed, oriented, simply-connected 4-manifold. Two spin$^{c}$\hspace{-1pt} structures $\s_1$ and $\s_2$ on $M$ are stably equivalent if and only if $d(\s_1) = d(\s_2) \in \mathbb{N}_0$ and  $c_1(\s_1)^2 = c_1(\s_2)^2 \in H^4(M)$.
\end{theorem}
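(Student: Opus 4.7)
\emph{Forward direction.} Assume $\s_1$ and $\s_2$ are stably equivalent, witnessed by some $g$ and an orientation-preserving diffeomorphism $f \colon M \# W_g \to M \# W_g$ satisfying $\s_1' \simeq \s_2' \circ f$ over $BSO_4$, where $\s_i'$ denotes the spin$^{c}$\hspace{-1pt} structure on $M \# W_g$ obtained by extending $\s_i$ using $\s_g$. Naturality of the first Chern class gives $c_1(\s_1') = f^* c_1(\s_2')$, and since $f_*[M \# W_g] = [M \# W_g]$ one concludes $c_1(\s_1')^2 = c_1(\s_2')^2$ as integers. Under the connected-sum splitting $H^2(M \# W_g) \cong H^2(M) \oplus H^2(W_g)$, cup products across the two summands vanish in $H^4$, so $c_1(\s_i')^2 = c_1(\s_i)^2 + c_1(\s_g)^2$. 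Cancelling the common term yields $c_1(\s_1)^2 = c_1(\s_2)^2$ in $H^4(M)$.

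\emph{Backward direction, Step 1: spin$^{c}$ bordism.} For the converse the plan is to reinterpret stable equivalence as a question in $4$-dimensional spin$^{c}$ bordism and then invoke Kreck's stable diffeomorphism theorem. The pairs $(M,\s_i)$ define elements of $\Omega_4^{\Spinc}$. By a classical calculation of Stong (equivalently, characteristic $4$-manifold bordism as formulated by Freedman-Kirby~\cite{freedman-kirby}), $\Omega_4^{\Spinc}$ is detected by the signature and the integer $c_1^2$. Since both spin$^{c}$ structures sit on the same oriented $M$, their signatures automatically agree, and the hypothesis $c_1(\s_1)^2 = c_1(\s_2)^2$ then forces $[M,\s_1] = [M,\s_2] \in \Omega_4^{\Spinc}$, producing a spin$^{c}$ cobordism between $(M,\s_1)$ and $(M,\s_2)$.

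\emph{Backward direction, Step 2: stable diffeomorphism.} The normal $1$-type of a simply-connected spin$^{c}$ $4$-manifold is the fibration $B\Spinc \to BSO$, and $\Omega_4^{\Spinc}$ is precisely the bordism group of this normal $1$-type, so Kreck's stable diffeomorphism theorem~\cite{KreckSurgeryAndDuality} applies: closed $4$-manifolds that are bordant over this normal $1$-type become diffeomorphic after connected sum with sufficiently many copies of $S^2 \times S^2$, via a diffeomorphism covering the normal smoothings. This produces some $g$ and an orientation-preserving diffeomorphism $f \colon M \# W_g \to M \# W_g$ intertwining $\s_1 \# \tau_1$ and $\s_2 \# \tau_2$ over $BSO_4$ for some spin$^{c}$ structures $\tau_i$ on $W_g$. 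The main expected obstacle is replacing the $\tau_i$ by the preferred $\s_g$ from the definition of stable equivalence. Since spin$^{c}$ structures on a simply-connected 4-manifold form an $H^2$-torsor, and Kreck's argument forces $c_1(\tau_1)^2 = c_1(\tau_2)^2$, this discrepancy should be absorbable by further stabilisation, choosing auxiliary spin$^{c}$ structures on the new $S^2 \times S^2$ summands to reroute both $\tau_i$ to the preferred $\s_{g'}$ on $W_{g'}$; verifying this reabsorption cleanly is the delicate part of the argument.
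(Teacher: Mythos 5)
Your forward direction and your Step 1 coincide with the paper's argument: additivity and naturality of $c_1^2$ for the forward implication, and the Stong/Freedman--Kirby identification of $\Omega_4^{\Spinc}$ by $(\sigma, \tfrac18(c_1^2-\sigma))$ to conclude that $(M,\s_1)$ and $(M,\s_2)$ are spin$^{c}$-bordant. The divergence is Step 2, and that is where your proposal has a genuine gap. The paper does not argue as you do; it quotes Kreck's Theorem C \cite{KreckSurgeryAndDuality} in the packaged form ``equal classes in $\Omega_4^{\Spinc}$ if and only if stably equivalent,'' so the bookkeeping about the structures on the stabilising summands never appears in its write-up. You instead apply the stable diffeomorphism theorem to get a diffeomorphism intertwining $\s_1\#\tau_1$ and $\s_2\#\tau_2$ for \emph{some} $\tau_i$ on $W_g$, and then explicitly leave unproved the ``reabsorption'' of the $\tau_i$ into the preferred $\s_{g'}$. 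That unproved step is not a routine verification: it is exactly where the content of the backward direction lies.

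Moreover, the repair you sketch would not work as stated. By Lemma~\ref{spinc-lemma}, on a simply-connected $4$-manifold homotopy classes of spin$^{c}$ structures correspond bijectively to characteristic classes via $c_1$, so by Lemma~\ref{lemma:c1-invariant} equivalence is governed by the orbit of $c_1$ under isometries of the intersection form, and this orbit is \emph{not} determined by $c_1^2$ alone: the divisibility of $c_1$ is also an isometry invariant. For example, on $W_{g'}$ the characteristic classes $(2,0,\dots,0)$ and $(0,\dots,0)$ have the same square, yet no isometry relates them; hence choosing auxiliary spin$^{c}$ structures on new summands so that the squares match cannot by itself ``reroute'' $\tau_i$ to the preferred structure. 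A further point needing care is your claim that the normal $1$-type of a simply-connected spin$^{c}$ $4$-manifold is $B\Spinc \to BSO$: Kreck's Theorem C requires normal $1$-smoothings, and a spin$^{c}$ lift is $2$-connected onto $\pi_2(B\Spinc)\cong\Z$ only for suitably primitive $c_1$ (it fails, for instance, when $M$ is spin and $c_1(\s)=0$); the normal $1$-type is $B\Spin$ or $BSO$ according to $w_2$. So as written Step 2 does not go through; closing it along your lines would require a genuine analysis of orbits of characteristic vectors under stabilisation (including the role of the chosen $\s_g$), rather than a $c_1^2$-matching argument, or else quoting the stable classification in the precise form the paper uses.
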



\begin{proof}
For the forward direction, the square of the Chern class and its divisibility are preserved by stable equivalence because we fixed the spin$^{c}$\hspace{-1pt} structure on $W_g$ to be the structure with trivial first Chern class, and because equivalence of spin$^{c}$\hspace{-1pt} structures preserve Chern numbers and the divisibility.

For the reverse direction, by Lemma~\ref{lem:spincd}, for a fixed 4-manifold $M$, two spin$^{c}$\hspace{-1pt} structures $\s_1$ and $\s_2$ on $M$ with $d(\s_1) = d(\s_2) =d$ determine $B\Spinc(d)$-structures $\s_1(d)$ and $\s_2(d)$, and therefore elements of $\Omega_4^{\Spinc(d)}$.    Since  $c_1(\s_1)^2 = c_1(\s_2)^2$, it follows that $(c_1/d(\s_1(d)))^2 = (c_1/d(\s_2(d)))^2$: for $d=0$ this is automatic; for $d \neq 0$ apply $\pi^*$ to $c_1(\s_1)^2 = c_1(\s_2)^2$ and use~\eqref{equation:defining-prop-ofc1-over-d}. Therefore, since $\sigma(M)$ is independent of tangential structures, by Lemma~\ref{lem:spincd} $(M,\s_1(d))$ and $(M,\s_1(d))$ are bordant over $B\Spinc(d)$.
Then we apply Kreck's stable diffeomorphism theorem~\cite[Theorem~C]{KreckSurgeryAndDuality}, which in the current situation implies that $\Spinc(d)$ structures on $M$ are stably equivalent if they are bordant. Here we use that the maps $M \to B\Spinc(d)$ are 1-smoothings by Lemma~\ref{lem:spinc(d)}.  Via $\pi \colon B\Spinc(d) \to B\Spinc$, a stable equivalence of $\Spinc(d)$ structures determines a stable equivalence of $\Spinc$ structures.
\end{proof}

 %

 %

Now we are ready to prove the main result of this section.
Let $C \in \mathbb{Z}$ be such that $|C| \geq 16$ and $8 \mid C$. Define $P(C) := |\mathcal{P}_{C/8}|$, namely the number of distinct primes dividing $C/8$.
We consider $S^2 \times S^2$ with a fixed orientation. This determines an identification $H^4(S^2 \times S^2)=\Z$.

\begin{theorem}\label{thm:stable-cx-s2xs2}
 For every $C \in \Z$ with $|C| \geq 16$ and $8 \mid C$, there are $n:= 2^{P(C)-1}$ stably equivalent spin$^{c}$\hspace{-1pt} structures $\s_1,\dots,\s_n$ on $S^2 \times S^2$  with $c_1(\s_i)^2 = C \in H^4(S^2 \times S^2) = \Z$, that are all pairwise inequivalent.
\end{theorem}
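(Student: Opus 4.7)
The plan is to identify spin$^{c}$\hspace{-1pt} structures on $S^2\times S^2$ with elements of $H^2$, count coprime factorisations of $C/8$, and invoke the classification lemmas established earlier in the section.

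Since $S^2\times S^2$ is spin, $w_2=0$, so by Lemma~\ref{spinc-lemma}(i) every class in $H^2(S^2\times S^2)\cong\Z^2$ of the form $(2a,2b)$ is characteristic. Because $H_1(S^2\times S^2)=0$ is $2$-torsion free, Lemma~\ref{spinc-lemma}(ii)--(iii) shows that $c_1$ provides a bijection between homotopy classes of spin$^{c}$\hspace{-1pt} structures on $S^2\times S^2$ and classes of the form $\{(2a,2b):a,b\in\Z\}$. With respect to the standard basis the cup product pairing on $H^2$ is the hyperbolic form, so $(2a,2b)^2=8ab\in H^4(S^2\times S^2)=\Z$.

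Next I would construct the spin$^{c}$\hspace{-1pt} structures. Writing $N:=C/8$ with $|N|\geq 2$, the coprime factorisations $N=a\cdot b$ (with the signs arranged so that $ab=N$) are in bijection with subsets of the $P(C)$ distinct prime divisors of $N$; quotienting by the swap $(a,b)\leftrightarrow(b,a)$ gives $2^{P(C)-1}$ unordered coprime pairs $\{a,b\}$, none of which satisfies $a=b$ since $|N|>1$ rules out $\{1,1\}$. For each such pair, let $\s_{a,b}$ be the unique spin$^{c}$\hspace{-1pt} structure with $c_1(\s_{a,b})=(2a,2b)$; then $c_1(\s_{a,b})^2=8ab=C$. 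Theorem~\ref{thm:stable-classification} immediately gives that these spin$^{c}$\hspace{-1pt} structures are all stably equivalent.

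The main content is pairwise inequivalence. By Lemma~\ref{lemma:c1-invariant}, if $\s_{a,b}$ and $\s_{a',b'}$ were equivalent then some isometry of the cup product pairing on $H^2(S^2\times S^2)$ would send $(2a,2b)$ to $(2a',2b')$. A direct calculation (already recorded in the proof of Proposition~\ref{prop:ManifoldNab}) shows the isometry group of the rank-two hyperbolic form $H^+(\Z)$ consists of $\pm\Id$ and $\pm\bsm 0&1\\1&0\esm$, so the orbit of $(2a,2b)$ is $\{\pm(2a,2b),\pm(2b,2a)\}$. Hence equivalence forces $\{a,b\}=\pm\{a',b'\}$ as unordered pairs of integers. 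Since the representatives are chosen with consistent signs (all positive when $C>0$, and with $a>0$, $b<0$ when $C<0$), distinct unordered coprime factorisations yield genuinely distinct orbits, producing $2^{P(C)-1}$ pairwise inequivalent spin$^{c}$\hspace{-1pt} structures as required. The only subtlety to watch is the sign bookkeeping for $C<0$; but since the isometry action preserves $ab$, and hence the sign of $C$, it does not affect the count.
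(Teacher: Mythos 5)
Your proposal is correct and follows essentially the same route as the paper: realise each coprime factorisation of $C/8$ via a spin$^{c}$ structure with $c_1 = (2q_1,2q_2)$ using Lemma~\ref{spinc-lemma}, apply Theorem~\ref{thm:stable-classification} for stable equivalence, and rule out equivalences via Lemma~\ref{lemma:c1-invariant} together with the small isometry group of the rank-two hyperbolic form from the proof of Proposition~\ref{prop:ManifoldNab}. Your extra care with the sign bookkeeping for $C<0$ and with the free action of $H^2$ on homotopy classes is a slight elaboration of the same argument, not a different approach.
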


\begin{proof}
Let $M := S^2 \times S^2$.
Let $x,y \in H^2(M) \cong \Z^2$ be generators dual to $[\pt \times S^2]$ and $[S^2 \times \pt]$ respectively.
So $xy =1 \in H^4(M)$ while $x^2=y^2=0$.  Henceforth we identify $H^4(M) \cong \Z$.
Let $Q:= C/8$. There are $P(C)$ prime powers dividing $Q$.
Up to switching the order and multiplying both by $-1$, there are $2^{P(C)-1}$ ways to write $Q$ as a product of coprime integers $Q =q_1q_2$.
For each such factorisation, let $\s_i$ be a spin$^{c}$\hspace{-1pt} structure with
\[c_1(\s_i) = 2q_1x + 2q_2 y.\]
Such spin$^{c}$\hspace{-1pt} structures exist by Lemma~\ref{spinc-lemma}: every characteristic element of $H^2(M)$ can be realised as the first Chern class of some spin$^{c}$\hspace{-1pt} structure.
Note that
\[c_1(\s_i)^2 =  8q_1q_2 = 8Q=C \in \Z = H^4(S^2 \times S^2)\]
and $d(\s_i)=2$ for every $i$.
Thus by Theorem~\ref{thm:stable-classification}, all the $\s_i$ are stably equivalent to one another.
But as we saw in the proof of Proposition~\ref{prop:ManifoldNab} there is no isometry of the intersection pairing of $M$ that sends $(2q_1,2q_2)$ to $(2q_1',2q_2')$ in $H^2(M) \cong \Z^2$ for distinct unordered pairs $\{q_1,q_2\}$ and $\{q_1',q_2'\}$.
By Lemma~\ref{lemma:c1-invariant} it follows that the $\{\s_i\}$ are pairwise inequivalent spin$^{c}$\hspace{-1pt} structures.
\end{proof}


\bibliographystyle{annotate}
\bibliography{biblio}

\end{document}